\numberwithin{equation}{section}
\newtheorem{theorem}{Theorem}[section]
\newtheorem{lemma}[theorem]{Lemma}
\newtheorem{proposition}[theorem]{Proposition}
\newtheorem{corollary}[theorem]{Corollary}
\newtheorem{remark}[theorem]{Remark}
\newtheorem{example}[theorem]{Example}
\numberwithin{equation}{section}
\renewcommand{\geq}{\geqslant}
\newcommand{\di}{\operatorname{div}}
\newcommand{\dif}{\operatorname{d}\!}
\newcommand{\R}{\mathbb{R}}
\newcommand{\uu}{\mathbf{u}}
\newcommand{\locc}{\operatorname{loc}}
\newcommand{\sym}{\operatorname{sym}}
\newcommand{\trace}{\operatorname{Tr}}
\newcommand{\ball}{\operatorname{B}}
\newcommand{\sobo}{\operatorname{W}}
\newcommand{\lebe}{\operatorname{L}}
\newcommand{\hold}{\operatorname{C}}
\newcommand{\curl}{\operatorname{curl}}
\renewcommand{\leq}{\leqslant}
\newcommand{\imag}{\operatorname{i}}
\newcommand{\sg}{\varepsilon}
\newcommand{\A}{\mathbb{A}}
\newcommand{\rsym}{\mathds{R}_{\sym}^{N \times n}}
\renewcommand{\rsym}{\mathbb{R}_{\operatorname{sym}}^{n\times n}}
\newcommand{\dashint}{\fint}
\def\uu{\textrm{u\kern -1.3ex\raisebox{0.6ex}{$^\circ$}}}
\begin{document}

\title[Partial Regularity and $\mathscr{A}$-Quasiconvexity]{$\mathscr{A}$-Quasiconvexity and Partial Regularity}
\subjclass[2010]{35J50, 35J93, 49J50}
\keywords{Calculus of variations, partial regularity, elliptic differential operators, $\mathscr{A}$-quasiconvexity, Korn inequalities, weighted Korn inequalities, Fourier multiplication operators.}
\date{\today}
\thanks{\emph{Funding.} Partially funded by the Deutsche Forschungsgemeinschaft (DFG, German
Research Foundation) {\sl via} project 211504053 - SFB 1060 and project
390685813 - HCM}

\author[S. Conti]{Sergio Conti}
\author[F. Gmeineder]{Franz Gmeineder}
\address{Sergio Conti: Universit\"{a}t Bonn, Institut f\"{u}r angewandte Mathematik, Endenicher Allee 60, 53115 Bonn, Germany}
\email{sergio.conti@uni-bonn.de}
\address{Franz Gmeineder: Mathematisches Institut der Universit\"{a}t Bonn, Endenicher Allee 60, 53115 Bonn, Germany}
\email{fgmeined@math.uni-bonn.de}
\maketitle

\date{\today}

\begin{abstract}
We establish the first partial regularity result for local minima of strongly $\mathscr{A}$-quasiconvex integrals in the case where the differential operator $\mathscr{A}$ possesses an elliptic potential $\A$. As the main ingredient, the proof works by reduction to the partial regularity for full gradient functionals. Specialising to particular differential operators, the results in this paper thereby equally yield novel partial regularity theorems in the cases of the trace-free symmetric gradient, the exterior derivative or the div-curl-operator. 
\end{abstract}

\tableofcontents
\section{Introduction}
A variety of minimisation problems which help to model properties of materials or fluids, so e.g. their elastic behaviour, can be stated in terms of non-convex energies. Such energies often do not only depend on the gradients of the quantities of interest (so e.g. the deformations of a material) but certain differential expressions; see \cite{BartnikIsenberg,CoFoIu,CoMuOr,Dain,FoMu,Friedrichs,FuchsSchirra,FuchsSeregin,MuPa} for discussions, among others in elasticity and general relativity. Typical examples thereof are given by the symmetric gradients $\sg(u):=\frac{1}{2}(Du+Du^{\top})$ or the trace-free symmetric gradients $\sg^{D}(u):=\sg(u)-\frac{1}{n}\di(u)\mathbbm{1}_{n}$ for maps $u\colon\R^{n}\supset\Omega\to\R^{n}$, and we refer the reader to Section~\ref{sec:examples} for more examples. In this paper we aim to give a unifying partial regularity theory for \emph{variational problems involving elliptic differential operators}, a theme that we describe now.

Let $V\cong\R^{N}$, $W\cong\R^{l}$ and $X\cong\R^{m}$ be real vector spaces and consider for linear maps $\mathbb{A}_{j}\colon V\to W$, $j\in\{1,...,n\}$, the vectorial differential operator
\begin{align}\label{eq:form}
\A u=\sum_{j=1}^{n}\A_{j}\partial_{j}u,\qquad u\colon\R^{n}\to V. 
\end{align}
In addition, we assume that $\A$ has \emph{constant rank}, meaning that the Fourier symbol of $\A$ has rank independent of phase space variables $\xi\in\R^{n}\setminus\{0\}$; cf.~\eqref{eq:symbols}\emph{ff}. below for the requisite background terminology. Given $1<p<\infty$, let $F\in\hold(W)$ be an integrand which satisfies for some constants $c_{1},c_{2},c_{3}>0$ the standard coerciveness and growth bounds
\begin{align}\label{eq:growth}
c_{1}|z|^{p}-c_{2}\leq F(z)\leq c_{3}(1+|z|^{p})\qquad\text{for all}\;z\in W. 
\end{align}
Given an open and bounded domain $\Omega\subset\R^{n}$, we consider the multiple integral 
\begin{align}\label{eq:functional}
\mathscr{F}[u;\omega]:=\int_{\omega}F(\A u)\dif x,\qquad \omega\subset\Omega,
\end{align}
which is well-defined for maps $u\colon\Omega\to V$  satisfying $\A u\in\lebe^{p}(\Omega;W)$. As usual, we say that $u\in\lebe_{\locc}^{p}(\Omega;V)$ is a \emph{local minimiser} provided $\A u\in\lebe_{\locc}^{p}(\Omega;W)$ and 
\begin{align}\label{eq:local}
\mathscr{F}[u;\omega]\leq\mathscr{F}[u+\varphi;\omega]\qquad\text{for all}\;\omega\subset\Omega\;\text{and}\;\varphi\in\hold_{c}^{\infty}(\omega;V). 
\end{align}
Essentially\footnote{In \cite{FoMu} only first order annihilators are considered, but the higher order case can be approached similarly; alternatively, this follows from the results of \textsc{Raita} \cite{Raita}.} by the foundational work of \textsc{Fonseca \& M\"{u}ller} \cite{FoMu}, the requisite lower semicontinuity for $\mathscr{F}$ (that is, if $u,u_{1},...\in\lebe_{\locc}^{1}(\Omega;V)$ satisfy $\A u_{j}\rightharpoonup\A u$ in $\lebe^{p}(\Omega;W)$, then $\mathscr{F}[u;\Omega]\leq \liminf_{j\to\infty}\mathscr{F}[u_{j};\Omega]$) is equivalent to $F$ being $\mathscr{A}$-quasiconvex for an annihilator $\mathscr{A}$ of $\A$. To describe the underlying terminology, a differential operator
\begin{align}\label{eq:form1}
\mathscr{A}u = \sum_{|\alpha|=k}\mathscr{A}_{\alpha}\partial^{\alpha}u,\qquad u\colon \R^{n}\to W
\end{align}
is called an \emph{annihilator} for $\A$ given by \eqref{eq:form} (and, conversely, $\A$ a \emph{potential} for $\mathscr{A}$) if and only if for each $\xi\in\R^{n}\setminus\{0\}$ the associated Fourier symbol complex 
\begin{align}\label{eq:complex}
V\stackrel{\A[\xi]}{\longrightarrow} W \stackrel{\mathscr{A}[\xi]}{\longrightarrow} Z
\end{align}
is exact at $W$: $\ker(\mathscr{A}[\xi])=\mathrm{im}(\A[\xi])$. Here, 
\begin{align}\label{eq:symbols}
\A[\xi]:=\sum_{j=1}^{n}\xi_{j}\A_{j},\;\;\;\mathscr{A}[\xi]:=\sum_{|\alpha|=k}\xi^{\alpha}\mathscr{A}_{\alpha},\qquad\xi=(\xi_{1},...,\xi_{n})\in\R^{n}
\end{align}
are the Fourier symbols of $\A$ or $\mathscr{A}$, respectively. In this situation, we say that $\A$ has \emph{constant rank} if 
\begin{align}
\dim(\mathrm{im}(\A[\xi]))\;\;\text{is independent of}\;\xi\in\R^{n}\setminus\{0\}, 
\end{align}
and this notion equally carries over to $\mathscr{A}$.

Following \cite{Dacorogna,FoMu}, given $\mathscr{A}$ of the form \eqref{eq:form1}, we call an integrand $F\in\hold(W)$ \emph{$\mathscr{A}$-quasiconvex} provided 
\begin{align}\label{eq:calAqc}
F(w)\leq \int_{(0,1)^{n}}F(w+\psi)\dif x
\end{align}
holds for all $w\in W$ and $\psi\in\hold^{\infty}(\mathbb{T}^{n};W)$ with $(\psi)_{(0,1)^{n}}=0$ and $\mathscr{A}\psi=0$, where $\mathbb{T}^{n}$ is the $n$-dimensional torus. In the situation where the symbol complex \eqref{eq:complex} is exact for each $\xi\in\R^{n}\setminus\{0\}$, it is not too difficult to show that the $\mathscr{A}$-quasiconvexity is equivalent to 
\begin{align}\label{eq:Aqc}
F(w)\leq \dashint_{U}F(w+\A\varphi)\dif x
\end{align}
for all open sets $U\subset\R^{n}$, $w\in W$ and $\varphi\in\hold_{c}^{\infty}(U;V)$ (cf.~\cite[Cor.~6, Lem.~8]{Raita}). If $\mathscr{A}$ has constant rank, then by a recent result due to \textsc{Raita} (\cite[Thm.~1]{Raita}, also see Lemma~\ref{lem:potentials} below), there always exists a potential $\A$ for $\mathscr{A}$ and so \eqref{eq:calAqc} and \eqref{eq:Aqc} are equivalent indeed. 

The aforementioned result due to \textsc{Fonseca \& M\"{u}ller} then easily implies the existence of local minima (or, e.g., global minima subject to certain Dirichlet constraints) by virtue of the direct method. We may hereafter inquire as to whether local minima share the by now well-known regularity features known from the usual full gradient theory for quasiconvex variational problems; cf. \cite{CFM98,DGK05,Evans,Giusti,Gm0,Gmeineder2,KristensenTaheri,Mingione1,Mingione2}  and the references therein for a non-exhaustive list. In this sense, the main focus of the present paper is on the \emph{regularity of local minima} subject to the $\mathscr{A}$-quasiconvexity of $F$.
\subsection{Partial regularity and main results}
 In general, since the minimisation of $\mathscr{F}[-;\Omega]$ given by \eqref{eq:functional} is a genuinely vectorial problem, a wealth of counterexamples \emph{even for the gradient $\A=D$} establishes that full H\"{o}lder regularity of minima cannot be expected; cf.~\cite{Beck,Giusti,Mingione1} for examples. A suitable substitute is then given by (the $\hold_{\locc}^{1,\alpha}$-)\emph{partial regularity}, meaning that there exists an open set $O\subset\Omega$ with $\mathscr{L}^{n}(\Omega\setminus O)=0$ such that $u\in\hold_{\locc}^{1,\alpha}(O;V)$ for all $0\leq\alpha<1$.

Based on the symbol complex~\eqref{eq:complex}, our first observation is that partial regularity of minima can be expected if and only if the potential $\A$ is \emph{elliptic}, a notion from the theory of overdetermined systems (cf.~\textsc{H\"{o}rmander}  \cite{Hoermander} and \textsc{Spencer} \cite{Spencer}). Namely, we say $\A$ given by \eqref{eq:form} is \emph{elliptic} if for each $\xi\in\R^{n}\setminus\{0\}$ 
\begin{align}\label{eq:ellipticity}
\text{the symbol map}\;\;\;\A[\xi]\colon V\to W\;\;\;\text{is injective}. 
\end{align}
Ellipticity of $\A$ is indeed equivalent to $\ker(\A)\subset\hold^{\infty}$ (cf.~Lemma~\ref{cor:smooth}), which in turn is required for the desired partial regularity.

We hereafter let $\A$ be an elliptic differential operator of the form \eqref{eq:form} and $\mathscr{A}$ an annihilator thereof. Adapting conditions which are by now routine for the full gradient situation, we hereafter suppose that the integrand $F\colon W\to\R$ satisfies the following set of hypotheses for some fixed $1<p<\infty$: 
\begin{enumerate}[label={(H\arabic{*})},start=1]
\item\label{item:H1} $F\in\hold^{2}(W)$. 
\item\label{item:H2} There exists $c>0$ such that $|F(z)|\leq c(1+|z|^{p})$ holds for all $z\in W$. 
\item\label{item:H3} $F$ is $p$-strongly $\mathscr{A}$-quasiconvex, meaning that there exists $\ell>0$ such that 
\begin{align*}
W\ni z \mapsto F(z)-\ell V_{p}(z):=F(z)-\ell \Big((1+|z|^{2})^{\frac{p}{2}}-1\Big)
\end{align*}
is $\mathscr{A}$-quasiconvex.
\end{enumerate}
Subject to these assumptions, the main result of the present paper is the following theorem:\begin{theorem}[Partial regularity]\label{thm:main1}
Let $\Omega\subset\R^{n}$ be open and bounded. Moreover, let $\A$ be a constant rank differential operator of the form \eqref{eq:form} and $\mathscr{A}$ be a constant rank differential operator of the form \eqref{eq:form1} such that \eqref{eq:complex} is exact at $W$ for every $\xi\in\R^{n}\setminus\{0\}$. Finally, suppose that $F\colon W\to\R$ satisfies \emph{\ref{item:H1}--\ref{item:H3}}. Then the following are equivalent: 
\begin{enumerate}
\item\label{item:THMelliptic} $\A$ is elliptic in the sense of \eqref{eq:ellipticity}.
\item\label{item:THMpr} Every local minimiser $u$ of $\mathscr{F}$ in the sense of \eqref{eq:local} is  $\hold_{\locc}^{1,\alpha}$-partially regular.
\end{enumerate}
\end{theorem}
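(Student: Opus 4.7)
For the easier direction (b)\,$\Rightarrow$\,(a), I argue by contraposition. If \eqref{eq:ellipticity} fails, there exist $\xi_{0}\in\R^{n}\setminus\{0\}$ and $v_{0}\in V\setminus\{0\}$ with $\A[\xi_{0}]v_{0}=0$. Picking a continuous, nowhere-differentiable scalar $g\in\lebe_{\locc}^{p}(\R)$ (e.g.\ a Weierstrass function), the map $u(x):= g(\xi_{0}\cdot x)\,v_{0}$ belongs to $\lebe_{\locc}^{p}(\Omega;V)$ and satisfies $\A u\equiv 0$ in $\mathcal{D}'(\Omega;W)$. Applying \eqref{eq:Aqc} with $w=0$ to any perturbation $\varphi\in\ccinfty(\omega;V)$ gives $\mathscr{F}[u;\omega]=F(0)\mathscr{L}^{n}(\omega)\leq\mathscr{F}[u+\varphi;\omega]$, so $u$ is a local minimiser; since $u$ fails to be of class $\hold^{1}$ along the $\xi_{0}$-direction at every point, no open set of full measure can support $u\in\hold^{1,\alpha}_{\locc}$, and partial regularity is ruled out.

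For the substantive direction (a)\,$\Rightarrow$\,(b), the plan is to run the classical excess-decay scheme from the theory of strongly quasiconvex full-gradient functionals, with ellipticity of $\A$ supplying a generalised Korn inequality that takes over the role of Poincar\'e. The reduction alluded to in the abstract begins with the observation that, on writing $\A u = L(Du)$ for the natural linear map $L\colon V\otimes\R^{n}\to W$, the integrand $\tilde F(\xi):= F(L\xi)$ is classically quasiconvex by \eqref{eq:Aqc} and satisfies the $p$-growth bound \ref{item:H2}. The missing ingredient is coercivity: ellipticity \eqref{eq:ellipticity} delivers this through an $\A$-Korn oscillation estimate of the form
\begin{align*}
\dashint_{\brxo}\big|Du-(Du)_{\brxo}\big|^{p}\dif x \leq C\dashint_{\brxo}\big|\A u-(\A u)_{\brxo}\big|^{p}\dif x
\end{align*}
(valid modulo an appropriate projection onto the finite-dimensional $\ker\A$), which converts the strong $\mathscr{A}$-quasiconvexity assumption \ref{item:H3} into genuine $\lebe^{p}$-control over $Du$ throughout the argument.

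With this in hand I would then execute the standard four-step scheme for the excess $\Phi(x_{0},r):= \dashint_{\brxo}V_{p}(\A u-(\A u)_{\brxo})\dif x$: (i) a Caccioppoli inequality of the second kind, obtained by testing minimality against $u-a$ for a comparison map $a$ with $\A a$ approximating $(\A u)_{\brxo}$, exploiting \ref{item:H3} and $\A$-Korn to absorb the cutoff error terms; (ii) an $\A$-harmonic approximation lemma, asserting that small $\Phi(x_{0},r)$ forces $u$ to be $\lebe^{p}$-close on $\ball(x_{0},r/2)$ to a solution of the frozen-coefficient system $\A^{*}\bigl(F''((\A u)_{\brxo})\A v\bigr)=0$, which is Legendre--Hadamard elliptic thanks to \eqref{eq:ellipticity} and \ref{item:H3}; (iii) Campanato-type decay for such $\A$-harmonic maps; (iv) combining (i)--(iii) to obtain $\Phi(x_{0},\theta r)\leq\tfrac{1}{2}\Phi(x_{0},r)$ at points with initially small excess, and iterating over dyadic scales to produce an open set $O$ of full measure with $\A u\in\hold^{0,\alpha}_{\locc}(O;W)$. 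A final application of $\A$-Korn upgrades this conclusion to $u\in\hold^{1,\alpha}_{\locc}(O;V)$.

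The main obstacle I anticipate is the Caccioppoli step. The admissible perturbations of a local minimiser are of the form $\A\varphi$, so cutoff arguments cannot be performed directly on $\A u$ but must be executed at the potential level via $u$; the ensuing commutator terms $[\A,\eta]u$ are not themselves $\A$-exact and have to be absorbed by means of the $\A$-Korn inequality and the constant rank structure, which is precisely where the heart of the reduction lies. Once this and the $\A$-harmonic approximation are secured, the excess-decay iteration proceeds in essentially the same way as in the full-gradient theory.
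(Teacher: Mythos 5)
Your direction (b)$\Rightarrow$(a) is correct and essentially the paper's argument: a plane wave $g(\langle \xi_{0},x\rangle)v_{0}$ with $g$ nowhere differentiable lies in $\ker\A$, is a local minimiser by $\mathscr{A}$-quasiconvexity, and is nowhere $\hold^{1}$, which rules out partial regularity.

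For (a)$\Rightarrow$(b) you correctly identify the decisive reduction --- set $\widetilde{F}:=F\circ L$ (the paper's $G=F\circ\pi_{\A}$) and use a Korn-type inequality for elliptic $\A$ to upgrade \ref{item:H3} to $p$-strong quasiconvexity of $\widetilde{F}$ --- but you then abandon it in favour of a Caccioppoli / harmonic-approximation / excess-decay scheme run at the level of $\A u$, whose Caccioppoli step you explicitly leave open. That is a genuine gap: the commutator terms $[\A,\eta]u$ you flag are precisely the difficulty the reduction is designed to circumvent, and you give no argument for absorbing them. What you miss is that once $\widetilde{F}$ is shown to be $p$-strongly quasiconvex (it inherits $\hold^{2}$-regularity and $p$-growth from $F$ by linearity of $L$), no further PDE work is required: a local minimiser of $\int F(\A u)\dif x$ is \emph{literally} a local minimiser of $\int\widetilde{F}(Du)\dif x$, so the classical full-gradient theorems (Acerbi--Fusco for $p\geq 2$, Carozza--Fusco--Mingione for $1<p<2$) yield $\hold_{\locc}^{1,\alpha}$-partial regularity outright. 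Two further defects sit in the coercivity step itself. First, the Korn inequality actually needed is the modular estimate $\int\psi(|D\varphi|)\dif x\leq c\int\psi(|\A\varphi|)\dif x$ for compactly supported $\varphi$ --- with $\psi(t)=t^{2}+t^{p}$ when $p\geq 2$, and with the shifted $N$-functions $\Psi_{a}$, $a=|\pi_{\A}(z)|$, when $1<p<2$, so that the constant is uniform in the shift $z$ entering the strong quasiconvexity inequality; your ball-averaged oscillation estimate ``modulo a projection onto $\ker\A$'' is neither proved nor in the form required (for compactly supported test maps no kernel projection arises). Second, you do not address the subquadratic case at all, where the $z$-dependent weight $(1+|z|^{2}+|D\varphi|^{2})^{\frac{p-2}{2}}$ makes this uniformity in $z$ the crux of the matter.
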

One of the key aspects of the present paper is that the \emph{partial regularity statement of} Theorem~\ref{thm:main1} \emph{can be fully reduced to the corresponding full gradient theory}. In consequence, setting up a separate partial regularity proof is not required. Effectively, Theorem \ref{thm:main1} is a consequence of Korn-type inequalities for Orlicz functions and elliptic operators $\A$; these allow to set up a one-to-one correspondence between functionals of the form \eqref{eq:form} and full gradient functionals. This, in turn, allows us to access the by now well-understood regularity theory for the latter. The underlying reason for this reduction to work is that condition \ref{item:H3} expresses a certain coerciveness property for the associated full gradient functional, cf. Lemma~\ref{lem:coercive}, which becomes accessible by Korn-type inequalities. As a metaprinciple, if $\A$ is elliptic, then \emph{any regularity result available for local minima of signed strongly $p$-quasiconvex variational integrals directly inherits to the local minima of the corresponding functionals $\mathscr{F}$ given by \eqref{eq:functional}}. In more technical terms, the corresponding full gradient partial regularity theory must be available without growth bounds on the second derivatives. This, by now, is available in most of the settings addressed here; also see Theorem~\ref{thm:higherorder} for a result involving growth bounds on the second derivatives and $p\geq 2$. Also, in the special case where $\A$ is the symmetric gradient operator, an easy case of Theorem~\ref{thm:main1} has already been observed by the second author \cite{Gm1}; however, many of the arguments employed in \cite{Gm1} rely on background results that are well-understood for the symmetric gradient but far from clear in the unifying setting addressed here. 

Theorem \ref{thm:main1} thus displays a sample theorem. A discussion of other, more general scenarios and partial regularity is provided in Section~\ref{sec:examples} and many more interesting  generalisations such as Orlicz growth are conceivable. However, to keep our exposition at a reasonable length, the relevant generalisations stick to power growth assumptions throughout. 

Let us finally note that the above theorem rather adopts the potential (i.e., primarily working on $\A$ rather than $\mathscr{A}$) than the annihilator viewpoint, and in general the first order of $\A$ does not imply the first order of $\mathscr{A}$. Conversely, $\mathscr{A}$ need not have a first order potential, and so a higher order variant of Theorem~\ref{thm:main1} is required; see Theorem~\ref{thm:higherorder} for a corresponding result. 

\subsection{Structure of the paper}
In Sections~\ref{sec:prelims} and ~\ref{sec:diffops} we collect some background facts from harmonic analysis and establish auxiliary results for the treatment of differential operators as required later on. Section~\ref{sec:main}, along with Korn-type inequalities of independent interest, then is devoted to the proof of Theorem~\ref{thm:main1}; here we also briefly address the existence of minima, showing the naturality of conditions~\ref{item:H1}--\ref{item:H3}. The final Section~\ref{sec:examples} discusses examples and extensions to more general integrands.

\section{Preliminary results}\label{sec:prelims}
\subsection{Basic notation}
Throughout, $\Omega\subset\R^{n}$ is an open and bounded set, and for $x\in\R^{n}$ and $r>0$ we denote $\ball(x,r):=\{y\in\R^{n}\colon\;|x-y|<r\}$. Also, given a subset $U\subset\R^{n}$, we define $\mathrm{co}(U)$ to be the convex hull of $U$. All finite dimensional vector spaces $V$ are equipped with the usual euclidean norm $|\cdot|$ and inner product $\langle\cdot,\cdot\rangle$, and we denote the unit sphere in $V$ by $\mathbb{S}_{V}:=\{v\in V\colon\;|v|=1\}$; for ease of notation, we set $\mathbb{S}^{n-1}:=\{x\in\R^{n}\colon\,|x|=1\}$. Given $m\in\mathbb{N}$, the $V$-valued $m$-multilinear mappings on $\R^{n}$ are denoted $\odot^{m}(\R^{n};V)$. As usual, for a measurable set $\Omega\subset\R^{n}$ with $\mathscr{L}^{n}(\Omega)\in (0,\infty)$ and $f\in\lebe_{\locc}^{1}(\R^{n};V)$, we set
\begin{align*}
(f)_{\Omega}:=\dashint_{\Omega}f\dif x := \frac{1}{\mathscr{L}^{n}(\Omega)}\int_{\Omega}f\dif x.
\end{align*}
For $f\in\lebe^{1}(\R^{n};V)$, we use the following normalisation for the Fourier transform of $f$: 
\begin{align*}
\mathscr{F}f(\xi):=\widehat{f}(\xi):=\frac{1}{(2\pi)^{\frac{n}{2}}}\int_{\R^{n}}f(x)e^{-\imag\langle x,\xi\rangle}\dif x,\qquad \xi\in\R^{n}.
\end{align*} 
Finally, we write $a\lesssim b$ provided there exists a constant $C>0$ such that $a\leq Cb$ and $a\simeq b$ if $a\lesssim b$ and $b\lesssim a$; the underlying constants will be specified if required.

\subsection{Harmonic analysis and Orlicz integrands}
In this section we collect some auxiliary results from harmonic analysis which shall turn out instrumental for the partial regularity proof below. Let $1<q<\infty$. We say that $\omega\in\lebe_{\locc}^{1}(\R^{n})$ is an $A_{q}$-\emph{Muckenhoupt weight} (in formulas $\omega\in A_{q}$) if and only if $\omega> 0$ $\mathscr{L}^{n}$-a.e. in $\R^{n}$ and 
\begin{align}
A_{q}(\omega):=\sup_{Q}\left(\Big(\,\dashint_{Q}\omega\dif x\Big)\Big(\dashint_{Q}\omega^{-\frac{1}{q-1}}\dif x \Big)^{q-1} \right)<\infty, 
\end{align} 
the supremum ranging over all non-degenerate cubes $Q\subset\R^{n}$. We refer to $A_{q}(\omega)$ as the $A_{q}$\emph{-constant} of $\omega$. Given $\omega\in A_{q}$, we say that a constant $c=c(\omega)>0$ is \emph{$A_{q}$-consistent} if and only if it only depends on $A_{q}(\omega)$.

Let $\psi\colon\R_{\geq 0}\to \R_{\geq 0}$ be differentiable. We say that $\psi$ is an $N$-function provided $\psi(0)=0$ and its derivative $\psi'$ is right-continuous, non-decreasing together with
\begin{align}\label{eq:Phi1}
\psi'(0)=0,\;\;\;\psi'(t)>0\;\;\text{for}\;t>0\;\;\;\text{and}\;\lim_{t\to\infty}\psi'(t)=\infty. 
\end{align}
Given an $N$-function $\psi$, we say that $\psi$ is of class $\Delta_{2}$ if there exists $K>0$ such that $\psi(2t)\leq K\psi(t)$ for all $t\geq 0$, and define $\Delta_{2}(\psi)$ to be the infimum over all possible such constants. Analogously, we say that  $\psi$ is of class $\nabla_{2}$ provided the Fenchel conjugate $\psi^{*}(t):=\sup_{s\geq 0}(st-\psi(s))$ is of class $\Delta_{2}$, and we let $\nabla_{2}(\psi):=\Delta_{2}(\psi^{*})$. If $\psi$ satisfies both the $\Delta_{2}$- and the $\nabla_{2}$-condition, we say that $\psi$ is of class $\Delta_{2}\cap\nabla_{2}$. Each $N$-function $\psi$ gives rise to the Orlicz-Lebesgue space $\lebe^{\psi}(\R^{n};V)$, being defined as the linear space of all measurable $u\colon\R^{n}\to V$ with
\begin{align*}
\|u\|_{\lebe^{\psi}(\R^{n};V)}:=\inf\big\{\lambda>0\colon \int_{\R^{n}}\psi\left(\frac{|u|}{\lambda}\right)\dif x \leq 1\big\}<\infty.
\end{align*}
Clearly, if $\psi(t)=|t|^{p}$, then $\lebe^{\psi}=\lebe^{p}$. We next state a theorem of Mihlin-H\"{o}rmander type which substantially enters the proof of Theorem~\ref{thm:main1} below. 
\begin{lemma}[of Mihlin-H\"{o}rmander type]\label{lem:Mihlin}
Let $\Theta\in\hold^{\infty}(\R^{n}\setminus\{0\};\mathscr{L}(W;V))$ be homogeneous of degree zero and let $\psi\in\Delta_{2}\cap\nabla_{2}$ be an $N$-function. Then 
\begin{align*}
T_{\Theta}\colon \hold_{c}^{\infty}(\R^{n};W)\ni u \mapsto \mathscr{F}^{-1}\big[\Theta(\xi)\mathscr{F}u(\xi) \big](x),\qquad x\in\R^{n}, 
\end{align*}
extends to a bounded linear operator $T_{\Theta}\colon\lebe^{\psi}(\R^{n};W)\to\lebe^{\psi}(\R^{n};V)$. The operator norm $\|T_{\Theta}\|_{\lebe^{\psi}\to\lebe^{\psi}}$ only depends on $\Theta,\Delta_{2}(\psi)$ and $\nabla_{2}(\psi)$, and we have the following \emph{modular estimate}:
\begin{align}\label{eq:modular0}
\int_{\R^{n}}\psi(|T_{\Theta}u|)\dif x \leq c\int_{\R^{n}}\psi(|u|)\dif x\qquad\text{for all}\;u\in\lebe^{\psi}(\R^{n};W)
\end{align}
with a constant $c=c(\Theta,\Delta_{2}(\psi),\nabla_{2}(\psi))>0$. 
\end{lemma}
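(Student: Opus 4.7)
Since $\Theta \in \hold^{\infty}(\R^{n}\setminus\{0\};\mathscr{L}(W;V))$ is homogeneous of degree zero, each derivative $\partial^{\alpha}\Theta$ is $\hold^{\infty}$ away from the origin and homogeneous of degree $-|\alpha|$, so $\Theta$ automatically satisfies the classical Mihlin--H\"{o}rmander symbol bounds $|\partial^{\alpha}\Theta(\xi)|\leq C_{\alpha}|\xi|^{-|\alpha|}$ for every multi-index $\alpha$, with $C_{\alpha}$ depending only on a finite number of $\hold^{\infty}$-seminorms of $\Theta|_{\mathbb{S}^{n-1}}$. The plan is to realise $T_{\Theta}$ as a Calder\'{o}n--Zygmund convolution operator, obtain weighted $\lebe^{q_{0}}$-bounds with $A_{q_{0}}$-consistent constants via the Coifman--Fefferman theory, and then extrapolate to the full Orlicz scale via Rubio de Francia's extrapolation theorem.

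First I would analyse the convolution kernel $K:=\mathscr{F}^{-1}\Theta$. Splitting $\Theta = \overline{\Theta}+\Theta_{0}$, where $\overline{\Theta}\in\mathscr{L}(W;V)$ is the mean of $\Theta$ over $\mathbb{S}^{n-1}$ and $\Theta_{0}$ has zero spherical mean, one obtains the corresponding decomposition $K=\overline{\Theta}\delta_{0}+K_{0}$, where $K_{0}$ is a principal-value homogeneous of degree $-n$, smooth off $0$ distribution obeying the standard Calder\'{o}n--Zygmund size and gradient estimates $|K_{0}(x)|\lesssim|x|^{-n}$ and $|\nabla K_{0}(x)|\lesssim|x|^{-n-1}$. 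The Coifman--Fefferman theory for such kernels then yields, for each fixed $q_{0}\in(1,\infty)$, the weighted bound
\begin{align*}
\int_{\R^{n}} |T_{\Theta}u|^{q_{0}}\,\omega\dif x \leq c\int_{\R^{n}} |u|^{q_{0}}\,\omega\dif x\qquad\text{for all}\;\omega\in A_{q_{0}},
\end{align*}
with a constant $c$ that is $A_{q_{0}}$-consistent in the sense introduced before the statement of Lemma~\ref{lem:Mihlin} and depends only on $\Theta$ through the Mihlin constants $C_{\alpha}$.

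Next I would invoke Rubio de Francia's extrapolation theorem in its Orlicz-space formulation: a (sub)linear operator which is bounded on $\lebe^{q_{0}}(\omega)$ for every $\omega\in A_{q_{0}}$ with constant depending only on $A_{q_{0}}(\omega)$ automatically extends to a bounded operator on $\lebe^{\psi}(\R^{n})$ for every $N$-function $\psi\in\Delta_{2}\cap\nabla_{2}$, with operator norm depending only on $\Delta_{2}(\psi)$, $\nabla_{2}(\psi)$, and the weighted constants above. The key structural point is that $\psi\in\Delta_{2}\cap\nabla_{2}$ is equivalent to the Boyd indices of $\lebe^{\psi}$ lying strictly between $1$ and $\infty$, which is precisely the range in which Rubio de Francia's machinery produces Orlicz bounds from a single weighted $\lebe^{q_{0}}$-estimate.

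The modular estimate~\eqref{eq:modular0} finally follows by homogeneity and the $\Delta_{2}$-condition: normalising $u$ via $\int_{\R^{n}}\psi(|u|)\dif x = 1$ gives $\|u\|_{\lebe^{\psi}}\leq 1$ by the norm-modular equivalence for Orlicz spaces, hence $\|T_{\Theta}u\|_{\lebe^{\psi}}\leq M$ for the operator norm $M$ produced above, and the $\Delta_{2}$-inequality $\psi(Mt)\leq c(M,\Delta_{2}(\psi))\psi(t)$ converts this into the desired modular bound, after which a scaling in $u$ removes the normalisation. I expect the main obstacle to be a careful book-keeping of constants through the extrapolation step: one must verify that both the Mihlin symbol constants and the $A_{q_{0}}$-consistency of the Calder\'{o}n--Zygmund constant are transported faithfully through Rubio de Francia's argument, so that the final dependence reduces to $\Theta$, $\Delta_{2}(\psi)$ and $\nabla_{2}(\psi)$ only.
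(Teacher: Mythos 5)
Your proposal follows essentially the same route as the paper: decompose the multiplier operator into a constant multiple of the identity plus a principal-value Calder\'{o}n--Zygmund convolution (the paper quotes this splitting from Duoandikoetxea), obtain weighted Lebesgue bounds with $A_{q}$-consistent constants (the paper uses Hyt\"{o}nen's $A_{2}$ theorem at $q=2$ and then works componentwise on $\Theta=(\Theta_{ij})$, whereas you invoke Coifman--Fefferman at a general $q_{0}$ for the operator-valued kernel directly; both are fine), and extrapolate to Orlicz spaces \`{a} la Rubio de Francia (the paper cites the Cruz-Uribe--Martell--P\'{e}rez result in the form of Diening et al.). The one step where you genuinely deviate, and where your sketch is not quite right as written, is the passage to the modular estimate: since $\psi$ is not homogeneous, ``a scaling in $u$'' does not remove the normalisation $\int\psi(|u|)\dif x=1$ --- if $\int\psi(|u|)\dif x=\lambda$ and you replace $u$ by $u/\mu$ with $\int\psi(|u|/\mu)\dif x=1$, you only obtain a modular bound for the shifted function $\psi(\cdot/\mu)$ with $\mu$ depending on $u$. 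The standard fix is to rescale $\psi$ vertically instead, replacing $\psi$ by $\widetilde{\psi}:=\psi/\lambda$, which leaves $\Delta_{2}(\psi)$ and $\nabla_{2}(\psi)$ unchanged so that the operator norm bound applies uniformly; alternatively, use the version of the extrapolation theorem that delivers the modular estimate directly alongside the norm estimate, which is exactly what the paper's Lemma~\ref{lem:CMP} provides and why the paper needs no such normalisation argument at all.
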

This lemma should be well-known to the experts, but we have been unable to trace it back to a precise reference. To give a quick argument, we recall the following extrapolation result due to \textsc{Cruz-Uribe} et al. \cite[Thm.~3.1]{CMP} in the version given by \textsc{Diening} et al. \cite[Prop.~6.1]{DRS10}:
\begin{lemma}\label{lem:CMP}
Let $1<q<\infty$ and suppose that $\mathcal{F}$ is a family of tuples $(f,g)\in\lebe_{\locc}^{1}(\R^{n})\times\lebe_{\locc}^{1}(\R^{n})$ such that for all $\omega\in A_{q}$ there holds 
\begin{align*}
\int_{\R^{n}}|f|^{q}\omega\dif x \leq K_{1}\int_{\R^{n}}|g|^{q}\omega\dif x \qquad\text{for all}\;(f,g)\in\mathcal{F}
\end{align*}
with an $A_{q}$-consistent constant $K_{1}>0$. Then for all $N$-functions $\varphi\in\Delta_{2}\cap\nabla_{2}$ there exists a constant $K_{2}=K_{2}(q,\Delta_{2}(\varphi),\nabla_{2}(\varphi))>0$ such that 
\begin{align}
\begin{split}
& \|f\|_{\lebe^{\varphi}(\R^{n})}\leq K_{1}K_{2}\|g\|_{\lebe^{\varphi}(\R^{n})},\\ 
& \int_{\R^{n}}\varphi(|f|)\dif x \leq K_{2}\int_{\R^{n}}\varphi(K_{1}|g|)\dif x
\end{split}
\end{align}
holds for all $(f,g)\in\mathcal{F}$. 
\end{lemma}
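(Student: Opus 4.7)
The plan is to adapt Rubio de Francia's extrapolation method to the Orlicz setting, following the cited approach of Cruz-Uribe--Martell--P\'{e}rez. The central analytic input beyond the weighted $\lebe^{q}$-hypothesis is Diening's theorem: for every $\varphi\in\Delta_{2}\cap\nabla_{2}$ the Hardy-Littlewood maximal operator $\mathcal{M}$ is bounded on $\lebe^{\varphi}(\R^{n})$, with operator norm $M_{\varphi}$ depending only on $\Delta_{2}(\varphi)$ and $\nabla_{2}(\varphi)$; the same bound holds on $\lebe^{\varphi^{*}}(\R^{n})$ because $\nabla_{2}(\varphi)=\Delta_{2}(\varphi^{*})$.

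First I would build the Rubio de Francia iteration. For nonnegative $h\in\lebe^{\varphi}(\R^{n})$, set
\[
\mathcal{R}h:=\sum_{k=0}^{\infty}\frac{\mathcal{M}^{k}h}{(2M_{\varphi})^{k}}.
\]
Three properties follow by direct inspection: (i) $h\leq\mathcal{R}h$ pointwise; (ii) $\|\mathcal{R}h\|_{\lebe^{\varphi}}\leq 2\|h\|_{\lebe^{\varphi}}$ by geometric summation; and (iii) $\mathcal{M}(\mathcal{R}h)\leq 2M_{\varphi}\,\mathcal{R}h$, which is the $A_{1}$-Muckenhoupt condition with $[\mathcal{R}h]_{A_{1}}\leq 2M_{\varphi}$. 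Since $A_{1}\subset A_{q}$ with quantitative control, $A_{q}(\mathcal{R}h)$ is bounded purely in terms of $q$ and $M_{\varphi}$, so the hypothesis on $\mathcal{F}$ applies with this weight. An analogous iteration on $\lebe^{\varphi^{*}}(\R^{n})$, built from $\mathcal{M}$ acting on the dual space, produces the dual $A_{1}$-weight needed below.

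Next I would prove the norm inequality via Orlicz duality. The generalised H\"{o}lder-Orlicz inequality yields $\|f\|_{\lebe^{\varphi}}\simeq \sup\{\int_{\R^{n}}|f|h\dif x:h\geq 0,\,\|h\|_{\lebe^{\varphi^{*}}}\leq 1\}$, with constants controlled by $\Delta_{2}(\varphi)$ and $\nabla_{2}(\varphi)$. Replacing $h$ by its Rubio de Francia extension and then splitting $|f|h=(|f|\omega^{1/q})(h\omega^{-1/q})$ for a suitable product $\omega$ of $\mathcal{R}h$ and the dual iterate, one produces a weight $\omega\in A_{q}$ such that H\"{o}lder's inequality in $\lebe^{q}(\omega\dif x)$ and $\lebe^{q'}(\omega^{1-q'}\dif x)$, combined with the weighted hypothesis on $(f,g)$ and a final return through Orlicz duality on the $g$-side, yields the bound with constant $K_{1}K_{2}$. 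The modular inequality then follows from the norm inequality by rescaling $f\mapsto f/\lambda$, $g\mapsto g/\lambda$ with $\lambda=\|K_{1}g\|_{\lebe^{\varphi}}$ and invoking the $\Delta_{2}$-condition on $\varphi$ to trade norm bounds for modular bounds up to constants.

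I expect the main obstacle to be the weight-splitting in the second step: one must produce an $A_{q}$-weight (not merely $A_{1}$), arrange that $K_{1}$ enters linearly in the final estimate rather than being absorbed into less tractable factors, and keep all remaining constants dependent solely on $q,\,\Delta_{2}(\varphi)$ and $\nabla_{2}(\varphi)$. This delicate interplay between the two Muckenhoupt iterations and Orlicz duality is the true content of Rubio de Francia's insight and is sensitive to the exponents chosen in the splitting.
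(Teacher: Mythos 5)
The paper does not actually prove this lemma --- it is quoted as a known result from Cruz-Uribe--Martell--P\'erez \cite{CMP} in the form given by Diening et al.\ \cite{DRS10} --- so there is no internal proof to compare against. Your sketch follows exactly the Rubio de Francia extrapolation route used in those references: boundedness of the Hardy--Littlewood maximal operator on $\lebe^{\varphi}$ and on $\lebe^{\varphi^{*}}$ (which is precisely where $\Delta_{2}\cap\nabla_{2}$ enters, via $\nabla_{2}(\varphi)=\Delta_{2}(\varphi^{*})$), the iteration $\mathcal{R}h=\sum_{k}\mathcal{M}^{k}h/(2M_{\varphi})^{k}$ producing $A_{1}$-weights with constant controlled by $M_{\varphi}$, Orlicz duality, and a Jones-type factorisation $\omega=\mathcal{R}_{2}h\cdot(\mathcal{R}_{1}u)^{1-q}\in A_{q}$ to feed into the weighted hypothesis. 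This outline is correct, and the point you flag as delicate (choosing the exponents in the splitting so that $K_{1}$ enters linearly and all other constants depend only on $q$, $\Delta_{2}(\varphi)$, $\nabla_{2}(\varphi)$) is indeed the only nontrivial bookkeeping; note that the second iteration $\mathcal{R}_{1}$ must be applied to a suitably normalised function built from $f$ and $g$, not only to the dual test function $h$, in order to close the duality argument --- this is exactly the step you leave unspecified.

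The one genuine flaw is the final step: the modular estimate does \emph{not} follow from the norm estimate by rescaling. The Luxemburg norm and the modular are not comparable up to constants (only their unit balls coincide), and after normalising by $\lambda=K_{1}\|g\|_{\lebe^{\varphi}}$ the resulting bound $\int_{\R^{n}}\varphi(|f|/(K_{2}\lambda))\dif x\leq 1$ cannot be ``undone'': the $\Delta_{2}$-constant needed to pull $\lambda$ back out of $\varphi$ grows with $\lambda$, hence depends on $g$, and the right-hand side $\int\varphi(K_{1}|g|)\dif x$ may be arbitrarily small while this argument only yields a bound by $1$. The modular inequality requires a separate run of the extrapolation machine at the modular level, e.g.\ writing $\int\varphi(|f|)\dif x=\int|f|h\dif x$ with $h:=\varphi(|f|)/|f|$, which satisfies $\int\varphi^{*}(h)\dif x\leq\int\varphi(|f|)\dif x$ by Young's inequality, running the same weight construction, and closing with Young's inequality on the $g$-side; this is how it is handled in the cited sources.
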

\begin{proof}[Proof of Lemma~\ref{lem:Mihlin}] 
We recall from \cite[Chpt.~4.5,~Thm.~4.3]{Duo00} that, if $m\in\hold^{\infty}(\R^{n}\setminus\{0\};\mathbb{C})$ is a function homogeneous of degree zero, then the multiplier operator $\mathscr{S}(\R^{n})\ni f \mapsto T_{m}f:=\mathscr{F}^{-1}(m\widehat{f})$ can be represented as 
\begin{align}\label{eq:Tmsplit}
T_{m}f=T_{m}^{(1)}f + T_{m}^{(2)}f := af+\mathrm{p.v.}\Big(\frac{1}{|\cdot|^{n}}\Xi(\tfrac{\cdot}{|\cdot|})\Big)*f ,\qquad f\in\mathscr{S}(\R^{n})
\end{align}
for some $a\in\mathbb{C}$ and $\Xi\in\hold^{\infty}(\mathbb{S}^{n-1})$ with zero average, where $\mathbb{S}^{n-1}$ is the $(n-1)$-dimensional unit sphere and $\mathrm{p.v.}$ denotes the Cauchy principal value. In the standard terminology of harmonic analysis, $T_{m}^{(2)}$ then is a Calder\'{o}n-Zygmund operator (cf.~\cite[Def.~5.11]{Duo00}). In consequence, by the results of \cite{Hyt12}, there exists a constant $c=c(m)>0$ such that 
\begin{align*}
\|T_{m}\|_{\lebe_{\omega}^{2}(\R^{n})\to\lebe_{\omega}^{2}(\R^{n})}\leq c(m)A_{2}(\omega)
\end{align*}
for all $\omega\in A_{2}$. Now let $\psi\in\Delta_{2}\cap\nabla_{2}$. Since the $(\lebe_{\omega}^{2}\to\lebe_{\omega}^{2})$-operator norm of $T_{m}$ thence is $A_{2}$-consistent, Lemma~\ref{lem:CMP} with $q=2$ yields the existence of some $K_{1}=K_{1}(m)>0$ and $K_{2}=K_{2}(\Delta_{2}(\psi),\nabla_{2}(\psi))>0$ such that $\|T_{m}f\|_{\lebe^{\psi}(\R^{n})}\leq K_{1}K_{2}\|f\|_{\lebe^{\psi}(\R^{n})}$ and 
\begin{align*}
\int_{\R^{n}}\psi(|T_{m}f|)\dif x \leq K_{2}\int_{\R^{n}}\psi(K_{1}|f|)\dif x
\end{align*}
for all $f\in\lebe^{\psi}(\R^{n})$. Identifying $V\cong\R^{N}$ and $W\cong\R^{l}$, we may write $\Theta=(\Theta_{ij})_{1\leq i \leq N,\,1\leq j \leq l}$ and $u=(u_{1},...,u_{l})$. Then $(T_{\Theta}u)_{i}=\sum_{j=1}^{l}\mathscr{F}^{-1}[\Theta_{ij}\widehat{u}_{j}]$ for $i=1,...,N$. Applying the above  to $m=\Theta_{ij}$ and $f=u_{j}$, we consequently obtain \eqref{eq:modular0} because of $\psi\in\Delta_{2}\cap\nabla_{2}$.  The proof is complete. 
\end{proof}
In the above proof, the requisite boundedness of $T_{m}$ also follows by \cite[Thm.~7.11]{Duo00}, but \cite{Hyt12} gives a particularly transparent tracking of the dependencies of the constants.

Now let $\psi\in\hold^{1}([0,\infty))\cap\hold^{2}((0,\infty))$ be an $N$-function which satisfies $\psi'(t)\simeq t\psi''(t)$ uniformly in $t>0$. Following \cite{DE08}, we then define for $a\geq 0$ the corresponding \emph{shifted $N$-function} $\psi_{a}\colon\R_{\geq 0}\to \R_{\geq 0}$ by 
\begin{align}\label{eq:shifted}
\psi_{a}(t):=\int_{0}^{t}\frac{\psi'(a+s)}{a+s}s\dif s,\qquad t\geq 0.
\end{align}
We then record from \cite[Lem.~23]{DE08} and \cite[Def.~2]{DLSV} (also see \cite[Sec.~B]{DFTW}) the following background facts: 
\begin{enumerate}[label={(F\arabic{*})},start=1]
\item\label{item:F1} There exists $c=c(\Delta_{2}(\psi),\nabla_{2}(\psi))>0$ such that 
\begin{align*}
\frac{1}{c}\psi_{a}(t)\leq \psi''(a+t)t^{2} \leq c\psi_{a}(t)\qquad\text{for all}\;a,t\geq 0.
\end{align*}
\item\label{item:F2} There holds $\psi_{a}\in\Delta_{2}\cap\nabla_{2}$ for all $a\geq 0$ and we have
\begin{align*}
\Delta_{2}(\psi_{a}) \simeq \Delta_{2}(\psi)\;\;\;\text{and}\;\;\;\nabla_{2}(\psi_{a})\simeq \nabla_{2}(\psi)
\end{align*}
uniformly in $a$. Thus, the family $(\psi_{a})_{a\geq 0}$ satisfies the $\Delta_{2}$- and $\nabla_{2}$-conditions uniformly in $a\geq 0$. 
\end{enumerate}
Finally, a comparison lemma; for this, we define the auxiliary map $V_{p}$ on $\R^{m}$ by 
\begin{align}\label{eq:Vpfunction}
V_{p}(z):=(1+|z|^{2})^{\frac{p}{2}}-1,\qquad z\in\R^{m}.
\end{align}
\begin{lemma}[{\cite[Lem.~2.4]{DFLM}, \cite[Sec.~6.2, (6.5)ff.]{Gm1}}]\label{lem:compare}
Let $1<p<\infty$ and $m\in\mathbb{N}$. 
\begin{enumerate}
\item\label{item:compare1} Define $V_{p}(z)$ for $z\in\R^{m}$ by \eqref{eq:Vpfunction}. Then there exists $0<\theta_{p}<\infty$ such that for all $z,w\in\R^{m}$ there holds
\begin{align}
\begin{split}
\frac{1}{\theta_{p}}(1+|z|^{2}+|w|^{2})^{\frac{p-2}{2}}|w|^{2} & \leq V_{p}(z+w) -V_{p}(z)-\langle V'_{p}(z),w\rangle \\ & \leq \theta_{p}(1+|z|^{2}+|w|^{2})^{\frac{p-2}{2}}|w|^{2}.
\end{split}
\end{align}
\item\label{item:compare2} Let $1<p<2$ and define $\Psi\colon [0,\infty)\to[0,\infty)$ by $\Psi(t):=(1+t)^{p-2}t^{2}$. Then there exists a constant $c=c(p)>0$ such that for all $a,t\geq 0$ there holds 
\begin{align}\label{eq:PsiBounds}
\begin{split}
\frac{1}{c}\Psi''(a+t)&\leq (1+a^{2}+t^{2})^{\frac{p-2}{2}} \leq c \Psi''(a+t), \\ 
\frac{1}{c}\Psi''(t)t &\leq \Psi'(t) \leq c\Psi''(t)t.
\end{split}
\end{align} 
\end{enumerate}
\end{lemma}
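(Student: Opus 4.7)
For part \ref{item:compare1}, the natural approach is Taylor's formula with integral remainder,
\begin{align*}
V_{p}(z+w)-V_{p}(z)-\langle V_{p}'(z),w\rangle = \int_{0}^{1}(1-s)\langle V_{p}''(z+sw)w,w\rangle\dif s,
\end{align*}
combined with a pointwise two-sided bound on the Hessian quadratic form $\langle V_{p}''(y)w,w\rangle$. Direct differentiation gives
\begin{align*}
V_{p}''(y)=p(1+|y|^{2})^{(p-2)/2}\id + p(p-2)(1+|y|^{2})^{(p-4)/2}\,y\otimes y,
\end{align*}
so that $\langle V_{p}''(y)w,w\rangle = p(1+|y|^{2})^{(p-4)/2}\bigl[(1+|y|^{2})|w|^{2}+(p-2)\langle y,w\rangle^{2}\bigr]$. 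Invoking $|\langle y,w\rangle|\leq |y||w|$ and splitting into the cases $p\geq 2$ (drop the nonnegative cross term for the lower bound, bound $\langle y,w\rangle^{2}\leq (1+|y|^{2})|w|^{2}$ for the upper bound) and $1<p<2$ (use the identity $(1+|y|^{2})+(p-2)|y|^{2}=1+(p-1)|y|^{2}$ for the lower bound, drop the nonpositive cross term for the upper bound) yields the pointwise comparison $\langle V_{p}''(y)w,w\rangle\simeq (1+|y|^{2})^{(p-2)/2}|w|^{2}$ with constants depending only on $p$.

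The remaining issue is to replace $1+|z+sw|^{2}$ by $1+|z|^{2}+|w|^{2}$ inside the integral. The upper estimate $1+|z+sw|^{2}\leq 2(1+|z|^{2}+|w|^{2})$ is immediate, and the matching reverse comparison (needed to get the correct sign in the exponent $(p-2)/2$) follows from a short case split on whether $|w|\leq C|z|$ or not; when $|w|$ dominates, a definite fraction of $s\in[0,1]$ leaves $|z+sw|$ comparable to $|w|$, and integrating $(1-s)$ over this subinterval produces the required $p$-dependent lower constant. I expect this passage from the pointwise Hessian bound to the integrated bound to be the most delicate step.

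For part \ref{item:compare2}, the strategy is a direct computation. Twofold differentiation and factoring gives
\begin{align*}
\Psi'(t)=(1+t)^{p-3}\,t\,(pt+2),\qquad \Psi''(t)=(1+t)^{p-4}\bigl[p(p-1)t^{2}+4(p-1)t+2\bigr],
\end{align*}
both manifestly positive for $1<p<2$. The first bound in \eqref{eq:PsiBounds} reduces to the elementary equivalences $(1+a+t)^{2}\simeq 1+a^{2}+t^{2}$ (via $a+t\leq \sqrt{2}\sqrt{a^{2}+t^{2}}$ and $(a+t)^{2}\geq a^{2}+t^{2}$ for $a,t\geq 0$) together with $p(p-1)\tau^{2}+4(p-1)\tau+2\simeq 1+\tau^{2}$ for $\tau\geq 0$. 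The second bound follows by inspecting the explicit rational function
\begin{align*}
\frac{\Psi'(t)}{t\,\Psi''(t)}=\frac{(1+t)(pt+2)}{p(p-1)t^{2}+4(p-1)t+2},
\end{align*}
which is continuous and positive on $[0,\infty)$ with finite limits $1$ at $t=0$ and $1/(p-1)$ as $t\to\infty$, hence bounded above and below by $p$-dependent positive constants.

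Since the statement is cited from \cite{DFLM,Gm1}, in the paper itself I would present only a brief sketch pointing to these references; the essential content is the Hessian computation and the case analysis behind the passage from $|z+sw|$ to $|z|,|w|$ in part \ref{item:compare1}, both of which are standard in the Acerbi--Fusco $V_{p}$-function toolkit.
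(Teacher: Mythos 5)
The paper itself gives no proof of this lemma -- it is imported verbatim from \cite[Lem.~2.4]{DFLM} and \cite[Sec.~6.2]{Gm1} -- so there is no in-paper argument to compare against; your route is the standard one behind those references. Your Hessian computation, the two-sided pointwise bound $\langle V_{p}''(y)w,w\rangle\simeq(1+|y|^{2})^{(p-2)/2}|w|^{2}$ obtained from the two case splits, and the whole of part~(b) (the formulas for $\Psi'$, $\Psi''$ and the limits of the ratio) are all correct.

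The one step you describe imprecisely is the integrated comparison in part~(a) when $1<p<2$. There the exponent $(p-2)/2$ is negative, so the direction still owed after the trivial estimate $1+|z+sw|^{2}\leq 2(1+|z|^{2}+|w|^{2})$ is an \emph{upper} bound for $\int_{0}^{1}(1-s)(1+|z+sw|^{2})^{(p-2)/2}\,\dif s$, and knowing that $|z+sw|\simeq|w|$ on a definite fraction of $[0,1]$ only controls that fraction: near the minimising parameter $s_{0}$ of $s\mapsto|z+sw|$ the integrand can be of order $1$ while the target $(1+|z|^{2}+|w|^{2})^{(p-2)/2}$ is of order $|w|^{p-2}\ll 1$ for large $|w|$. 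The standard repair is the quadratic identity $|z+sw|^{2}=|z+s_{0}w|^{2}+(s-s_{0})^{2}|w|^{2}$, which after substituting $u=(s-s_{0})|w|$ bounds the contribution of the bad region by $|w|^{-1}\int_{0}^{|w|}(1+u^{2})^{(p-2)/2}\,\dif u\simeq|w|^{p-2}$; this converges at exactly the right rate because $(p-2)+1=p-1>0$. With that replacement (and your "definite fraction" argument, which is the right one for the lower bound when $p\geq 2$), the proof is complete.
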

%
\section{Vectorial differential operators}\label{sec:diffops}
In the sequel, let $\A$ be a differential operator of the form \eqref{eq:form}. For future reference, we further set with the operator norm $|\cdot|$ on $\mathscr{L}(V;W)$
\begin{align}\label{eq:normofA}
\|\A\|:=\sum_{j=1}^{n}|\A_{j}|. 
\end{align}
We define the set $\mathscr{C}(\A)$ of \emph{pure $\A$-tensors} $v\otimes_{\A}\xi$ as the collection of elements
\begin{align}\label{eq:puretensors}
v\otimes_{\A}\xi := \A[\xi]v = \sum_{j=1}^{n}\xi_{j}\A_{j}v,\qquad\text{for}\;v\in V,\;\xi=(\xi_{1},...,\xi_{n})\in\R^{n}. 
\end{align}
Then $\mathscr{C}(\A)\subset W$, and we define $\mathscr{R}(\A)$ to be the linear hull of $\mathscr{C}(\A)$. The space $\mathscr{R}(\A)$ is (up to isomorphy) the smallest space in which $\A v(x)$ takes values when $v$ ranges over $\hold^{\infty}(\R^{n};V)$ and $x$ ranges over $\R^{n}$, see \cite[Sec.~5]{BDG}. Indeed, in the definition of $\A$ (cf.~\eqref{eq:form}) we might replace $W$ by any other $W'$ such that $W\hookrightarrow W'$, neither destroying the constant rank nor ellipticity properties of $\A$. For example, if 
\begin{itemize}
\item $\A u=\sg(u)=\frac{1}{2}(D+D^{\top})$ is the symmetric gradient, we may take $W=\R^{n\times n}$ and in this case, $\mathscr{R}(\A)=\R_{\sym}^{n\times n}$, the symmetric $(n\times n)$-matrices. 
\item $\A u=\sg^{D}(u)=\sg(u)-\frac{1}{n}\di(u)\mathbbm{1}_{n}$ is the trace-free symmetric gradient, we may take $W=\R^{n\times n}$ or $W=\R_{\sym}^{n\times n}$, and in this case, $\mathscr{R}(\A)=\R_{\sym,\mathrm{tf}}^{n\times n}$, the symmetric, trace-free $(n\times n)$-matrices. 
\end{itemize}
The benefit of passing to $\mathscr{R}(\A)$ is illustrated in Example~\ref{ex:duplicate} below. Working with general finite dimensional vector spaces $V$ and $W$ has some advantages, letting us e.g. deal with the exterior derivatives, cf. Section~\ref{sec:examples1}, Example~\ref{item:EX3}. We now argue that we may \emph{assume} $\mathscr{R}(\A)=W\subset\R^{N\times n}$ and that $F$ as in Theorem~\ref{thm:main1} can be tacitly supposed to be defined on a subset of the real $(N\times n)$-matrices. With our main results being established in this situation, it is then a somewhat lenghty yet elementary identification procedure between finite dimensional vector spaces to conclude Theorem~\ref{thm:main1} in the general case too; this is explained carefully in the Appendix, Section~\ref{sec:appendix}.


Denote $\Pi_{\A}\colon W\to\mathscr{R}(\A)$ the orthogonal projection onto $\mathscr{R}(\A)$. As to variational integrals \eqref{eq:functional} with $F\colon W\to\R$, we then have 
\begin{align}\label{eq:essentialreduction}
F(\A v(x))=F(\Pi_{\A}(\A v(x)))\qquad\text{for all}\;v\in\hold_{c}^{1}(\R^{n};V),\;x\in\R^{n}. 
\end{align}
Recalling that $V\cong\R^{N}$, we note that 
\begin{align}\label{eq:dimnote}
\dim(\mathscr{R}(\A))\leq \dim(V)n = Nn, 
\end{align}
which follows from the fact that, if $\{\mathbf{v}_{1},...,\mathbf{v}_{N}\}$ is a basis for $V$, then $\{\mathbf{v}_{i}\otimes_{\A}e_{j}\colon\;i=1,...,N,\,j=1,...,n\}$ (with $e_{j}$ being the $j$-th standard unit vector of $\R^{n}$) spans $\mathscr{R}(\A)$. As a main consequence of \eqref{eq:dimnote}, we may now assume that 
\begin{center}
\fbox{$V=\R^{N}$ and $W=\mathscr{R}(\A)\subset\R^{N\times n}$.}
\end{center}
Similarly as in \cite{Gmeineder2a}, we may thus write for $v=(v_{1},...,v_{N})\colon\R^{n}\to\R^{N}$
\begin{align}\label{eq:AOpdef}
\begin{split}
\A v(x) & := \pi_{\A}(\nabla v)(x) \\ & := \left(\begin{matrix} \sum_{i=1}^{n}\sum_{j=1}^{N}a_{1,1}^{i,j}\partial_{i}v_{j}(x) & \hdots & \sum_{i=1}^{n}\sum_{j=1}^{N}a_{1,n}^{i,j}\partial_{i}v_{j}(x) \\ \vdots & \ddots & \vdots \\ \sum_{i=1}^{n}\sum_{j=1}^{N}a_{N,1}^{i,j}\partial_{i}v_{j}(x) & \hdots & \sum_{i=1}^{n}\sum_{j=1}^{N}a_{N,n}^{i,j}\partial_{i}v_{j}(x) \end{matrix}\right),
\end{split}
\end{align}
where the linear map $\pi_{\A}\colon\R^{N\times n}\to\R^{N\times n}$ is defined in the obvious manner; here, $a_{\mu,\nu}^{i,j}\in\R$ for all $j,\mu\in\{1,...,N\}$ and $i,\nu\in\{1,...,n\}$.

\begin{example}[Duplicating elliptic operators]\label{ex:duplicate}
Let $\A$ an elliptic differential operator of the form \eqref{eq:form} with $V=\R^{N}$ and $W=\mathscr{R}(\A)\subset\R^{N\times n}$. Then for any $b_{1}\neq 0$ and $b_{2},...,b_{m}\in\R$ the $\mathscr{R}(\A)^{m}$-valued operator $\mathbb{B}$ defined by 
\begin{align*}
\mathbb{B}u = (b_{1}\A u, b_{2}\A u, ... ,b_{m}\A u)
\end{align*}
remains an elliptic first order differential operator. Still, $\mathbb{B}u$ is completely determined by the \emph{essential first component} $\alpha_{1}\mathbb{A}u$, and this is precisely reflected by passing to the essential range $\mathscr{R}(\A)$; note that $\dim(\mathscr{R}(\mathbb{B}))=\dim(\mathscr{R}(\A))$.
\end{example}
We conclude this preliminary section with the following background result, linking the $\mathscr{A}$-free and the $\A$-differential framework:
\begin{lemma}[{\textsc{Van Schaftingen} \cite[Prop.~4.2]{Va11}, \textsc{Raita} \cite[Thm.~1.1]{Raita}}]\label{lem:potentials}
The following hold: 
\begin{enumerate}
\item\label{item:VScompare} Let $\A$ be an elliptic differential operator of the form \eqref{eq:form}. Then there exists $k\in\mathbb{N}$, a real, finite dimensional vector space $Z$ and a $k$-th order $Z$-valued constant-rank  differential operator $\mathscr{A}$ of the form \eqref{eq:form1} such that \eqref{eq:complex} is exact at $W$ for any $\xi\in\R^{n}\setminus\{0\}$. 
\item\label{item:Rcompare} Let $\mathscr{A}$ be a constant-rank differential operator of the form \eqref{eq:form1}. Then there exists a real, finite dimensional vector space $V$, $l\in\mathbb{N}$ and a differential operator $\A=\sum_{|\alpha|=l}\A_{\alpha}\partial^{\alpha}$ with $\A_{\alpha}\in\mathscr{L}(V;W)$ such that \eqref{eq:complex} is exact at $W$ for any $\xi\in\R^{n}\setminus\{0\}$. 
\end{enumerate}
\end{lemma}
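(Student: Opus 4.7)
The plan is to treat the two parts separately, handing each via an explicit construction of the Fourier symbol of the complementary operator. Both rely on the observation that, under the relevant hypothesis (ellipticity of $\mathbb{A}$ in (a), constant rank of $\mathscr{A}$ in (b)), the subspace-valued map $\xi\mapsto\mathrm{im}(\mathbb{A}[\xi])$ (respectively $\xi\mapsto\ker(\mathscr{A}[\xi])$) has constant dimension on $\mathbb{R}^n\setminus\{0\}$, so the orthogonal projector onto it depends smoothly on $\xi$ there. The aim is to polynomialise this projector using classical cofactor formulae so that it becomes the symbol of an actual differential operator.

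For part~\ref{item:VScompare}, ellipticity of $\mathbb{A}$ means $\mathbb{A}[\xi]\colon V\to W$ is injective for every $\xi\in\mathbb{R}^n\setminus\{0\}$, hence $\mathbb{A}[\xi]^{*}\mathbb{A}[\xi]\in\mathscr{L}(V;V)$ is invertible there; write $\mathrm{adj}$ for the classical adjugate and $N=\dim V$. I would define the candidate $k$-th order symbol by
\begin{equation*}
\mathscr{A}[\xi]:=\det\bigl(\mathbb{A}[\xi]^{*}\mathbb{A}[\xi]\bigr)\,\mathrm{Id}_{W}-\mathbb{A}[\xi]\,\mathrm{adj}\bigl(\mathbb{A}[\xi]^{*}\mathbb{A}[\xi]\bigr)\,\mathbb{A}[\xi]^{*},
\end{equation*}
which is a homogeneous polynomial in $\xi$ of degree $k=2N$ with values in $\mathscr{L}(W;W)$, so it defines a $k$-th order differential operator $\mathscr{A}$ of the form \eqref{eq:form1} with $Z:=W$. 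By construction $\mathscr{A}[\xi]=\det(\mathbb{A}[\xi]^{*}\mathbb{A}[\xi])\,(\mathrm{Id}_{W}-P_\xi)$ on $\mathbb{R}^n\setminus\{0\}$, where $P_\xi$ is the orthogonal projector onto $\mathrm{im}(\mathbb{A}[\xi])$. Exactness $\ker(\mathscr{A}[\xi])=\mathrm{im}(\mathbb{A}[\xi])$ for $\xi\neq 0$ follows immediately, and since $\mathbb{A}[\xi]$ is injective one has $\dim\ker(\mathscr{A}[\xi])=N$ independently of $\xi$, so the constant rank property for $\mathscr{A}$ is inherited.

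For part~\ref{item:Rcompare} I would invoke Raita's theorem as stated, but the intended line of argument runs parallel. The constant rank assumption says $\xi\mapsto\ker(\mathscr{A}[\xi])$ has constant dimension, so the Moore--Penrose pseudoinverse $\mathscr{A}[\xi]^{\dagger}$ depends real-analytically on $\xi\neq 0$ and yields the smooth projector $\mathrm{Id}_{W}-\mathscr{A}[\xi]^{\dagger}\mathscr{A}[\xi]$ onto $\ker(\mathscr{A}[\xi])$. The goal is to rewrite this projector, after multiplication by a suitable scalar polynomial $p(\xi)$ in $\xi$, as $\mathbb{A}[\xi]$ for a linear map with polynomial entries. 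Once such a polynomial homogeneous representation is obtained with $\mathrm{im}(\mathbb{A}[\xi])=\ker(\mathscr{A}[\xi])$, one takes $V$ to be $W$ (or a quotient thereof) and reads off the order $l$ from the degree of the polynomial; exactness at $W$ is then automatic.

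The hard part is precisely the polynomialisation in~\ref{item:Rcompare}: the pseudoinverse and the projector onto $\ker(\mathscr{A}[\xi])$ are \emph{a priori} only rational in $\xi$, and clearing denominators globally on $\mathbb{R}^n\setminus\{0\}$ so that the result remains homogeneous of a single degree requires a non-trivial use of the constant rank condition, in contrast to~\ref{item:VScompare} where ellipticity provides a canonical scalar denominator $\det(\mathbb{A}[\xi]^{*}\mathbb{A}[\xi])$. This is exactly the content of Raita's theorem, to which I would defer for the detailed algebraic construction.
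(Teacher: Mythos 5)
Your proposal is correct, and it is essentially the route the paper takes: the paper offers no argument of its own for this lemma but simply cites \textsc{Van Schaftingen} and \textsc{Raita}, exactly as you do for part (b), where the polynomialisation of the projector onto $\ker(\mathscr{A}[\xi])$ under the constant-rank hypothesis is genuinely the content of Raita's theorem and cannot be dispatched by the adjugate trick alone. Your explicit construction for part (a) is the standard one underlying the cited result and checks out: $\mathrm{adj}(M)=\det(M)M^{-1}$ for invertible $M=\A[\xi]^{*}\A[\xi]$ gives $\mathscr{A}[\xi]=\det(\A[\xi]^{*}\A[\xi])(\mathrm{Id}_{W}-P_{\xi})$ with $P_{\xi}$ the orthogonal projector onto $\mathrm{im}(\A[\xi])$, both terms are homogeneous of degree $2N$ with real polynomial entries, and $\dim\ker(\mathscr{A}[\xi])=N$ for all $\xi\neq 0$ yields the constant-rank property.
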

Note that even $\A$ might have first order, $\mathscr{A}$ might have higher order (which is e.g. the case for $\A=\sg$ and $\mathscr{A}=\curl\curl$, called the \textsc{Saint-Venant} compatibility conditions).
\section{Elliptic potentials and the proof of Theorem~\ref{thm:main1}}\label{sec:main}
\subsection{A family of Korn-type inequalities}
Before we embark on the proof of Theorem~\ref{thm:main1}, we establish a family of Korn-type inequalities which enter our subsequent arguments in an instrumental way. Since it might be of independent interest, we state the result in a slightly sharper  and more general way than it is actually required below: 
\begin{proposition}[of Korn-type]\label{prop:Korn}
Let $\psi\in \Delta_{2}\cap\nabla_{2}$ and suppose that $\mathbb{A}$ is a differential operator of the form \eqref{eq:form}.
Then the following are equivalent: 
\begin{enumerate}
\item\label{item:Korn1} $\mathbb{A}$ is elliptic. 
\item\label{item:Korn2} There exists a constant $c>0$ depending only on $\mathbb{A}$, $\Delta_{2}(\psi)$ and $\nabla_{2}(\psi)$ such that for all $u\in\hold_{c}^{\infty}(\R^{n};V)$ there holds
\begin{align}\label{eq:modular}
\int_{\R^{n}}\psi(|Du|)\dif x \leq c\int_{\R^{n}}\psi(|\A u|)\dif x. 
\end{align}
\end{enumerate}
\end{proposition}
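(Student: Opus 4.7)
The plan is to prove the two implications separately, with the forward direction \ref{item:Korn1}$\Rightarrow$\ref{item:Korn2} resting on Lemma~\ref{lem:Mihlin} and the reverse direction \ref{item:Korn2}$\Rightarrow$\ref{item:Korn1} following from an oscillatory test function argument.

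For \ref{item:Korn1}$\Rightarrow$\ref{item:Korn2}, I would recover $Du$ from $\A u$ by a zero-homogeneous Fourier multiplier. On the Fourier side one has $\widehat{\A u}(\xi)=\imag\,\A[\xi]\hat{u}(\xi)$ and $\widehat{Du}(\xi)=\imag\,\xi\otimes\hat{u}(\xi)$. Ellipticity of $\A$ is equivalent to the assertion that, for every $\xi\in\R^{n}\setminus\{0\}$, the operator $\A[\xi]^{*}\A[\xi]\colon V\to V$ is invertible, and so the Moore-Penrose left inverse $\A[\xi]^{\dagger}:=(\A[\xi]^{*}\A[\xi])^{-1}\A[\xi]^{*}\in\mathscr{L}(W;V)$ is well-defined and smooth on $\R^{n}\setminus\{0\}$. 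Setting
\[
\Theta(\xi):=\xi\otimes \A[\xi]^{\dagger}\in\mathscr{L}(W;\R^{N\times n}),\qquad \xi\in\R^{n}\setminus\{0\},
\]
a brief homogeneity check using $\A[\lambda\xi]^{\dagger}=\lambda^{-1}\A[\xi]^{\dagger}$ shows that $\Theta$ is homogeneous of degree zero and of class $\hold^{\infty}(\R^{n}\setminus\{0\};\mathscr{L}(W;\R^{N\times n}))$. Substituting $\hat{u}=-\imag\,\A[\xi]^{\dagger}\widehat{\A u}$ in the formula for $\widehat{Du}$ then gives $Du=T_{\Theta}(\A u)$ for all $u\in\ccinfty(\R^{n};V)$, and the modular estimate \eqref{eq:modular0} from Lemma~\ref{lem:Mihlin}, applied componentwise, yields \eqref{eq:modular} with a constant depending only on $\A$, $\Delta_{2}(\psi)$ and $\nabla_{2}(\psi)$.

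For \ref{item:Korn2}$\Rightarrow$\ref{item:Korn1}, I would argue by contraposition. Suppose $\A$ is not elliptic, so that there exist $\xi_{0}\in\R^{n}\setminus\{0\}$ and $v_{0}\in V\setminus\{0\}$ with $\A[\xi_{0}]v_{0}=0$. Fix a cutoff $\eta\in\ccinfty(\R^{n};\R)$ with $\eta\equiv 1$ on some ball $B\subset\R^{n}$, and for $t>0$ consider the real-valued test field
\[
u_{t}(x):=\eta(x)\,v_{0}\cos(t\,\xi_{0}\cdot x).
\]
The product rule together with the identity $\A[\xi_{0}]v_{0}=0$ eliminates the leading-order $\O(t)$ contribution in $\A u_{t}$, yielding
\[
\A u_{t}(x)=\cos(t\,\xi_{0}\cdot x)\sum_{j=1}^{n}(\partial_{j}\eta)(x)\,\A_{j}v_{0},
\]
so that $|\A u_{t}|\leq\|\A\|\,|v_{0}|\,|\nabla\eta|$ uniformly in $t$. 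On the other hand, the oscillatory term survives in $Du_{t}$, giving $|Du_{t}(x)|\geq t|\xi_{0}||v_{0}|\,\eta(x)|\sin(t\,\xi_{0}\cdot x)|-|\nabla\eta||v_{0}|$. Restricting to $B\cap\{|\sin(t\,\xi_{0}\cdot x)|\geq 1/2\}$, a set of measure at least $|B|/2$ for $t$ large by an equidistribution argument, one obtains $|Du_{t}|\geq c\,t$ there; since $\psi$ is unbounded, $\int_{\R^{n}}\psi(|Du_{t}|)\dif x\to\infty$ as $t\to\infty$, which contradicts \eqref{eq:modular}.

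The conceptually substantive step is the forward direction, and specifically identifying the correct homogeneous-of-degree-zero symbol $\Theta$; once this is done the proof reduces to a direct invocation of Lemma~\ref{lem:Mihlin}. The main technical care in the reverse direction lies in ensuring that the cross terms produced by differentiating the cutoff $\eta$ do not spoil the lower bound on $|Du_{t}|$, which is why one uses a ball $B$ on which $\eta\equiv 1$ and thus $\nabla\eta=0$.
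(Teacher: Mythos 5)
Your proof is correct and follows essentially the same route as the paper: the forward direction uses the Mihlin--H\"{o}rmander lemma with the zero-homogeneous symbol $\xi\otimes(\A[\xi]^{*}\A[\xi])^{-1}\A[\xi]^{*}$ (the paper writes this componentwise as $\Theta_{j,\A}(\xi)=\xi_{j}(\A^{*}[\xi]\A[\xi])^{-1}\A^{*}[\xi]$), and the reverse direction is the same plane-wave construction along a degenerate direction $\xi_{0}$ with $\A[\xi_{0}]v_{0}=0$. The only cosmetic difference is that you use a concrete high-frequency profile $\cos(t\,\xi_{0}\cdot x)$, keeping the right-hand side of \eqref{eq:modular} merely bounded, whereas the paper takes an abstract sequence $(h_{i})$ with $\|\psi(|h_{i}|)\|_{\lebe^{1}}\to 0$ and $\|\psi(|h_{i}'|)\|_{\lebe^{1}}\to\infty$; both yield the required contradiction.
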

\begin{proof}
Ad '\ref{item:Korn1}$\Rightarrow$\ref{item:Korn2}'. The proof is a consequence of the Mihlin multiplier theorem in the version as given in Lemma~\ref{lem:Mihlin}. By ellipticity of $\mathbb{A}$, for each $\xi\in\R^{n}\setminus\{0\}$ the Fourier symbol $\mathbb{A}[\xi]\colon V\to W$ is injective and hence for each such $\xi$, $\A^{*}[\xi]\A[\xi]\colon V\to V$ is bijective. For $j\in\{1,...,n\}$ define an operator for $w\in\hold_{c}^{\infty}(\R^{n};W)$ by 
\begin{align}\label{eq:inverse}
\Phi_{j,\A}(w)(x) = \mathscr{F}_{\xi\mapsto x}^{-1}\big[\xi_{j}(\mathbb{A}^{*}[\xi]\mathbb{A}[\xi])^{-1}\mathbb{A}^{*}[\xi]\mathscr{F}w(\xi)\big],\qquad x\in\R^{n}.
\end{align}
Put $\Theta_{j,\mathbb{A}}(\xi):=\xi_{j}(\mathbb{A}^{*}[\xi]\mathbb{A}[\xi])^{-1}\mathbb{A}^{*}[\xi]$ for $\xi\in\R^{n}\setminus\{0\}$. Clearly,  the multiplier $\Theta_{j,\mathbb{A}}\in\hold^{\infty}(\R^{n}\setminus\{0\};\mathscr{L}(W;V))$ is homogeneous of degree zero, and with the notation of Lemma~\ref{lem:Mihlin}, $\Phi_{j,\mathbb{A}}=T_{\Theta_{j,\mathbb{A}}}$. Hence, by Lemma~\ref{lem:Mihlin} and $\psi\in\Delta_{2}\cap\nabla_{2}$, $\Phi_{j,\A}$ extends to a bounded linear operator $\lebe^{\psi}(\R^{n};W)\to\lebe^{\psi}(\R^{n};V)$ which also satisfies the modular estimate \eqref{eq:modular} by virtue of \eqref{eq:modular0}. Since $\Phi_{j,\A}(\A u)=\partial_{j}u$ everywhere for all $u\in\hold_{c}^{\infty}(\R^{n};V)$, \eqref{eq:modular} follows at once.

Ad  '\ref{item:Korn2}$\Rightarrow$\ref{item:Korn1}'. Suppose that $\mathbb{A}$ is not elliptic. Then there exists $\xi'\in\mathbb{S}^{n-1}$ such that $\A[\xi']v=0$ for some $v\in V$ with $|v|=1$. We choose $\mathbf{e}_{2},...,\mathbf{e}_{n}\in\R^{n}$ such that $\{\xi',\mathbf{e}_{2},...,\mathbf{e}_{n}\}$ is an orthonormal basis for $\R^{n}$ and define 
\begin{align*}
D:=\mathrm{co}\{\pm\xi'\pm\mathbf{e}_{2}\pm...\pm\mathbf{e}_{n}\}.
\end{align*}
Then $D\subset\ball(0,\sqrt{n})$, and we choose $\rho\in\hold_{c}^{\infty}(\R^{n};[0,1])$ with 
\begin{align}
\mathbbm{1}_{\ball(0,\sqrt{n})}\leq\rho\leq\mathbbm{1}_{\ball(0,R)},\;\;\text{where}\;R=2\sqrt{n}\left(1+\frac{1}{\sqrt{n}\min\{\tfrac{1}{\|A\|},1\}}\right)
\end{align}
and, by our choice of $R$, $|\nabla\rho|\leq \min\{\frac{1}{\|A\|},1\}$; here, $\|A\|$ is given by \eqref{eq:normofA}. Let $(h_{i})\subset\hold_{c}^{\infty}((-1,1))$ be such that $\|\psi(|h_{i}|)\|_{\lebe^{1}(\R)}\to 0$ and $\|\psi(|h'_{i}|)\|_{\lebe^{1}(\R)}\to\infty$  as $i\to\infty$. We define the plane waves  $u_{i}(x):=\rho(x)h_{i}(\langle x,\xi'\rangle)v$ so that $u_{i}\in\hold_{c}^{\infty}(\R^{n};V)$ and, since $\A[\xi']v=0$, 
\begin{align}\label{eq:derivatives}
\begin{split}
&\A u_{i}(x)=h_{i}(\langle x,\xi'\rangle)v\otimes_{\A}\nabla\rho(x),\\ & Du_{i}(x) = \rho(x)h'_{i}(\langle x,\xi'\rangle)v\otimes \xi' + h_{i}(\langle x,\xi'\rangle)v\otimes\nabla\rho.
\end{split}
\end{align}
In conclusion,
\begin{align*}
\int_{\R^{n}}\psi(|\A u_{i}|)\dif x & \stackrel{\eqref{eq:derivatives}}{=} \int_{\R^{n}}\psi(|h_{i}(\langle x,\xi'\rangle)v\otimes_{\A}\nabla\rho|)\dif x\\ & \!\!\!\!\!\!\!\!\stackrel{|\nabla\rho|\leq\|A\|^{-1}}{\leq} \int_{\ball(0,R)}\psi(|h_{i}(\langle x,\xi'\rangle)|)\dif x \\ & \;\,\leq  \int_{RD}\psi(|h_{i}(\langle x,\xi'\rangle)|)\dif x   \leq (2R)^{n-1}\int_{-R}^{R}\psi(|h_{i}(t)|)\dif t\to 0, 
\end{align*}
having used a change of variables of the usual euclidean basis to $\{\xi',\mathbf{e}_{2},...,\mathbf{e}_{n}\}$ and Fubini's theorem in the last inequality. Similarly, we obtain that 
\begin{align*}
\|\psi(|h_{i}(\langle\cdot,\xi'\rangle)v\otimes\nabla\rho|)\|_{\lebe^{1}(\R^{n})}\to 0.
\end{align*} 
On the other hand, with $D$ as above, 
\begin{align*}
\int_{\R^{n}}\psi(|\rho(x)h'_{i}(\langle x,\xi'\rangle) v\otimes\xi'|)\dif x & \geq \int_{D}\psi(|h'_{i}(\langle x,\xi'\rangle)|\,|v\otimes\xi'|)\dif x \\ 
& = 2^{n-1}\int_{-1}^{1}\psi(|h'_{i}(t)|)\dif t \to \infty.
\end{align*}
Because of $\eqref{eq:derivatives}_{2}$, we deduce by convexity and the $\Delta_{2}$-condition of $\psi$ that necessarily $\|\psi(|Du_{j}|)\|_{\lebe^{1}(\R^{n})}\to\infty$, creating a contradiction to \eqref{eq:modular}. The proof is complete. 
\end{proof}
Proposition~\ref{prop:Korn} generalises earlier works (cf.~\cite{AcerbiMingione,BD}) in a somewhat optimal way. Even though it is not required for our main proof below, we believe that it is possible to strengthen the previous proposition for $N$-functions $\psi$ and differential operators $\A$ of the form \eqref{eq:form} as follows. Namely, adapting the approach to counterexamples to $\lebe^{1}$-estimates in \cite{CFM05} as pursued in \cite{BD} in the case of the symmetric gradients, validity of \ref{item:Korn2} should be equivalent to 
\begin{enumerate}
\item[(a')] $\A$ is elliptic and at least one of the following holds: Either there exists a linear map $T\in\mathscr{L}(W;V\times\R^{n})$ such that $Du=T\A u$ for all $u\in\hold_{c}^{\infty}(\R^{n};V)$ or $\psi\in\Delta_{2}\cap\nabla_{2}$. 
\end{enumerate}
This would be in line with the results of \textsc{Kirchheim \& Kristensen} \cite{KiKr16} regarding the trivialisation of $\lebe^{1}$-estimates within the framework of Orlicz functions, and we intend to pursue this question in the future. Let us, however, remark that the above proof yields the following by-product:
\begin{corollary}\label{cor:Korn1}
Let $1<q<\infty$ and $\omega\in A_{q}$. If $\A$ is an elliptic differential operator of the form \eqref{eq:form}, then there exists a constant $c=c(q,\A,A_{q}(\omega))>0$ such that 
\begin{align*}
\int_{\R^{n}}|Du|^{q}\omega\dif x \leq c\int_{\R^{n}}|\A u|^{q}\omega\dif x \qquad\text{for all}\;u\in\hold_{c}^{\infty}(\R^{n};V). 
\end{align*}
\end{corollary}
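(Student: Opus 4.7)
The plan is to reuse the machinery already set up in the proof of Proposition~\ref{prop:Korn}, replacing the Orlicz-space bound from Lemma~\ref{lem:Mihlin} with its weighted $\lebe^{q}$ counterpart. For each $j\in\{1,\ldots,n\}$, consider the symbol
\begin{align*}
\Theta_{j,\A}(\xi):=\xi_{j}(\A^{*}[\xi]\A[\xi])^{-1}\A^{*}[\xi],\qquad \xi\in\R^{n}\setminus\{0\},
\end{align*}
which is well-defined by ellipticity of $\A$, smooth on $\R^{n}\setminus\{0\}$, and homogeneous of degree zero as a map into $\mathscr{L}(W;V)$. The corresponding Fourier multiplier operator $\Phi_{j,\A}$ defined by \eqref{eq:inverse} satisfies $\Phi_{j,\A}(\A u)=\partial_{j}u$ pointwise for every $u\in\hold_{c}^{\infty}(\R^{n};V)$, just as in the proof of Proposition~\ref{prop:Korn}.

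The only task that remains is to show that $\Phi_{j,\A}$ extends boundedly to $\lebe_{\omega}^{q}(\R^{n};W)\to\lebe_{\omega}^{q}(\R^{n};V)$ with an $A_{q}$-consistent operator norm. Identifying $V\cong\R^{N}$ and $W\cong\R^{l}$ so as to work componentwise, the entries of the multiplier $\Theta_{j,\A}$ are scalar, homogeneous of degree zero and smooth off the origin, so the decomposition \eqref{eq:Tmsplit} writes each componentwise operator as the sum of a pointwise multiplication and a Calder\'{o}n-Zygmund operator with kernel $|x|^{-n}\Xi(x/|x|)$ for some mean-zero $\Xi\in\hold^{\infty}(\mathbb{S}^{n-1})$. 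The $\lebe_{\omega}^{q}$-boundedness with $A_{q}$-consistent constants then follows either from the classical Coifman-Fefferman theorem for Calder\'{o}n-Zygmund operators with $A_{q}$ weights, or by combining the $(\lebe_{\omega}^{2}\to\lebe_{\omega}^{2})$-bound with $A_{2}$-consistent constant established in the proof of Lemma~\ref{lem:Mihlin} with the Rubio de Francia extrapolation theorem to pass to arbitrary $1<q<\infty$ and $\omega\in A_{q}$.

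Putting the two ingredients together yields
\begin{align*}
\int_{\R^{n}}|\partial_{j}u|^{q}\omega\dif x=\int_{\R^{n}}|\Phi_{j,\A}(\A u)|^{q}\omega\dif x\leq c\int_{\R^{n}}|\A u|^{q}\omega\dif x
\end{align*}
with $c=c(q,\A,A_{q}(\omega))>0$; summing over $j=1,\ldots,n$ delivers the corollary. There is no genuine obstacle beyond bookkeeping, since the operator-valued Fourier multiplier reduces to the scalar weighted Calder\'{o}n-Zygmund theory componentwise, exactly as at the end of the proof of Lemma~\ref{lem:Mihlin}.
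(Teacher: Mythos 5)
Your argument is correct and coincides with the paper's own proof: both reduce the inequality to the weighted $\lebe_{\omega}^{q}$-boundedness of the multiplier operators $\Phi_{j,\A}$ from \eqref{eq:inverse}, using the identity $\Phi_{j,\A}(\A u)=\partial_{j}u$ and the componentwise decomposition \eqref{eq:Tmsplit} into a constant multiple of the identity plus a Calder\'{o}n--Zygmund operator, whose $A_{q}$-weighted boundedness is classical. No gaps.
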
 
Indeed, since Calder\'{o}n-Zygmund operators are bounded on $\lebe_{\omega}^{q}$ if $\omega\in A_{q}$ (cf.~\cite[Thm.~7.11]{Duo00}), Corollary~\ref{cor:Korn1} follows from \eqref{eq:inverse} and \eqref{eq:Tmsplit} \emph{ff.}.

Standard elliptic regularity theory and the plane wave construction underlying Proposition~\ref{prop:Korn} moreover imply the following
\begin{corollary}\label{cor:smooth}
Let $\Omega\subset\R^{n}$ be open and let $\A$ be a constant rank differential operator of the form \eqref{eq:form}. If $\A$ is elliptic, then $\ker(\A;\Omega)\cap\lebe_{\locc}^{1}(\Omega;V)\subset\hold^{\infty}(\Omega;V)$. Conversely, if $\A$ is not elliptic, then there exists $v\in\ker(\A;\Omega)\cap\lebe_{\locc}^{1}(\Omega;V)$ such that $v\notin\hold(\omega;V)$ for any open subset $\omega\subset\Omega$. 
\end{corollary}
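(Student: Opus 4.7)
The statement is a biconditional, and I would handle the two directions by different tools: classical elliptic regularity going one way, and the plane-wave construction from the proof of Proposition~\ref{prop:Korn} going the other. \textbf{Forward direction ($\A$ elliptic implies smooth kernel):} let $v\in\lebe_{\locc}^{1}(\Omega;V)$ satisfy $\A v=0$ in $\mathscr{D}'(\Omega;W)$. The plan is to pass to the adjoint, so that $\A^{*}\A v=0$ in $\mathscr{D}'(\Omega;V)$, and observe that the constant-coefficient second-order operator $\A^{*}\A$ has Fourier symbol $\A[\xi]^{*}\A[\xi]\in\mathscr{L}(V;V)$ satisfying
\[
\langle \A[\xi]^{*}\A[\xi]w,w\rangle = |\A[\xi]w|^{2}>0\qquad(\xi\in\R^{n}\setminus\{0\},\;w\in V\setminus\{0\}),
\]
by injectivity of $\A[\xi]$. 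Hence $\A^{*}\A$ is classically elliptic on $V$-valued distributions, and hypoellipticity of constant-coefficient elliptic operators (applied, say, to $\rho v$ for $\rho\in\ccinfty(\Omega)$ via a parametrix on $\R^{n}$) yields $v\in\hold^{\infty}(\Omega;V)$.

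\textbf{Converse ($\A$ non-elliptic implies non-smooth element in kernel):} I would recycle the plane-wave calculation from the proof of Proposition~\ref{prop:Korn}. Pick $\xi'\in\mathbb{S}^{n-1}$ and $v_{0}\in V\setminus\{0\}$ with $\A[\xi']v_{0}=0$. For any $h\in\lebe_{\locc}^{1}(\R)$ set $v(x):=h(\langle x,\xi'\rangle)v_{0}$; a direct distributional computation, using that $\A$ has constant coefficients, yields $\A v = h'(\langle\cdot,\xi'\rangle)\,\A[\xi']v_{0}=0$ in $\mathscr{D}'(\Omega;W)$, regardless of the regularity of $h$, while clearly $v\in\lebe_{\locc}^{1}(\Omega;V)$. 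To defeat continuity on every open subset, I would choose $h\in\lebe_{\locc}^{1}(\R)$ that is essentially unbounded on every non-empty open interval of $\R$; such a profile is easily produced by summing a suitably damped family of local singularities (e.g.\ of type $|t-q_{k}|^{-1/2}$) at a dense countable subset $(q_{k})\subset\R$. Then for any open $\omega\subset\Omega$ one picks a ball $\ball(x_{0},r)\subset\omega$ whose image under $x\mapsto\langle x,\xi'\rangle$ is an open interval, and Fubini forces $v\notin\lebe_{\locc}^{\infty}(\omega;V)$, hence $v\notin\hold(\omega;V)$.

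The only genuinely non-routine step is the forward direction: the decisive move is the reduction to the honestly elliptic system $\A^{*}\A$, after which hypoellipticity is textbook material. Care must be taken only to justify the distributional identity $\A^{*}\A v=0$ for $v$ merely in $\lebe_{\locc}^{1}$, but this is immediate from $\A v=0$ in $\mathscr{D}'$. The converse, by contrast, is essentially the plane-wave calculation already used for Proposition~\ref{prop:Korn}, combined with the choice of a scalar profile $h$ that is locally integrable yet discontinuous on every scale.
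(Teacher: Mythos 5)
Your proposal is correct and follows essentially the same route as the paper: the forward direction reduces to the constant-coefficient elliptic system $\A^{*}\A v=0$ and invokes classical hypoellipticity, while the converse reuses the plane-wave construction $v=h(\langle\cdot,\xi'\rangle)v_{0}$ from the proof of Proposition~\ref{prop:Korn} with a locally integrable but badly behaved profile $h$. Your choice of an $h$ that is essentially unbounded on every interval (rather than merely nowhere continuous) is a harmless variant that makes the exclusion of a continuous representative slightly more explicit.
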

\begin{proof} 
If $u\in\mathscr{D}'(\Omega;V)$ solves $\A u=0$, then $\A^{*}\A u=0$, and so the $\hold^{\infty}$-regularity of $u$ follows from by now classical results for second order elliptic systems. For the second part, take a direction $\xi'\in\R^{n}\setminus\{0\}$ and a vector $v\in V$ as in the proof of Proposition~\ref{prop:Korn}, direction '\ref{item:Korn2}$\Rightarrow$\ref{item:Korn1}'. Then for any $h\in\lebe_{\locc}^{1}(\R)$ which is not continuous in any neighbourhood of any $x_{0}\in\R$, we put $v(x):=h(\langle x,\xi'\rangle)v$. Then $\A v = 0$ in $\mathscr{D}'(\omega;W)$ but $v\notin\hold(\omega;V)$ for any open set $\omega\subset\R^{n}$.  
\end{proof} 
\subsection{Proof of Theorem~\ref{thm:main1}}
We now come to the proof of Theorem~\ref{thm:main1}. Let us note that it is only the direction \ref{item:THMelliptic}$\Rightarrow$\ref{item:THMpr} which actually requires an argument: Namely, if $\A$ is not elliptic and $u\in\lebe_{\locc}^{1}(\Omega;V)$ is a local minimiser for $\mathscr{F}$ given by \eqref{eq:functional}, we utilise Corollary~\ref{cor:smooth} to find $v\in\ker(\A;\Omega)$ with $v\notin\hold(\omega;W)$ for all open subsets $\omega\subset\Omega$. If there is no $\hold_{\locc}^{1,\alpha}$-partially regular local minimiser, we are done. If, instead, there does exist a $\hold_{\locc}^{1,\alpha}$-partially regular minimiser, $u+v$ is clearly not $\hold_{\locc}^{1,\alpha}$-regular on any open subset on which $u$ is. 

Similarly as in \cite[(3.1)ff.]{Gm1}, we record that the $p$-strong $\mathscr{A}$-quasiconvexity of $F$ as asserted in \ref{item:H3} is equivalent to the existence of a constant $\nu>0$ such that 
\begin{align}\label{eq:pstrongEQUIV}
\nu\int_{(0,1)^{n}}(1+|z|^{2}+|\A\varphi|^{2})^{\frac{p-2}{2}}|\A\varphi|^{2}\dif x \leq \int_{(0,1)^{n}}F(z+\A\varphi)-F(z)\dif x
\end{align}
holds for all $z\in\mathscr{R}(\A)$ and $\varphi\in\hold_{c}^{1}((0,1)^{n};V)$. This is a consequence of Lemma~\ref{lem:compare}~\ref{item:compare1} and Lemma~\ref{lem:potentials}~\ref{item:VScompare}. 

Toward the proof of Theorem~\ref{thm:main1}, we recall from the full gradient regularity theory that a continuous integrand $H\colon\R^{N\times n}\to\R$ that satisfies 
\begin{align}\label{eq:Hgrowth}
|H(z)|\leq L(1+|z|^{p})\qquad\text{for all}\;z\in\R^{N\times n}
\end{align}
for some $L>0$ is called \emph{$p$-strongly quasiconvex} if there exists $\ell>0$ such that $H-\ell V_{p}$ is quasiconvex. Then, if $p\geq 2$, we have 
\begin{align*}
t^{2}+t^{p} =t^{2}+t^{p-2}t^{2} \leq t^{2}+(1+t^{2})^{\frac{p-2}{2}}t^{2} \leq 2(1+t^{2})^{\frac{p-2}{2}}t^{2}
\end{align*}
for all $t\geq 0$. Thus, if $p\geq 2$, the $p$-strong quasiconvexity of $H$ implies that 
\begin{align}\label{eq:weakerStrongQC}
\mu\int_{(0,1)^{n}}|D\varphi|^{2}+|D\varphi|^{p}\dif x & \leq \int_{(0,1)^{n}}H(z+D\varphi)-H(z)\dif x
\end{align}
with $\mu=\frac{\ell}{2\theta_{p}}$ for all $\varphi\in\hold_{c}^{1}((0,1)^{n};\R^{N})$,  $z\in\R^{N\times n}$ and the constant $\theta_{p}>0$ from Lemma~\ref{lem:compare}~\ref{item:compare1}. We then rely on the following background result by \textsc{Acerbi \& Fusco} \cite{AF1} and \textsc{Carozza, Fusco \& Mingione} \cite{CFM98}: 
\begin{proposition}[{\cite[Thm.~II.1]{AF1}, \cite[Thm.~3.2]{CFM98}}]\label{prop:CFM}
Let $1<p<\infty$ and let $\Omega\subset\R^{n}$ be open. Suppose that $H\in\hold^{2}(\R^{N\times n})$ satisfies \eqref{eq:Hgrowth} for all $z\in\R^{N\times n}$ and, 
\begin{enumerate}
\item\label{item:reductionreg1} if $p\geq 2$, \eqref{eq:weakerStrongQC} holds for some $\mu>0$, all $z\in\R^{N\times n}$ and all $\varphi\in\hold_{c}^{\infty}((0,1)^{n};\R^{N})$,
\item\label{item:reductionreg2} if $1<p<2$, $H$ is $p$-strongly quasiconvex. 
\end{enumerate}
Then any local minimiser $u\in\sobo_{\locc}^{1,p}(\Omega;\R^{N})$ of the integral functional 
\begin{align*}
v\mapsto \int H(Dv)\dif x
\end{align*}
is $\hold_{\locc}^{1,\alpha}$-partially regular on $\Omega$.
\end{proposition}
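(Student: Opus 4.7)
The plan is to establish partial regularity by the now classical excess-decay approach, running the argument in parallel for the two regimes $p\geq 2$ and $1<p<2$. Fix $x_{0}\in\Omega$ and define the \emph{excess}
\begin{align*}
\mathcal{E}(x_{0},r):=\dashint_{\ball(x_{0},r)}\big|V_{p}\bigl(\D u-(\D u)_{x_{0},r}\bigr)\big|^{2}\dif x,
\end{align*}
and the aim is to show that $\mathcal{E}(x_{0},r)\to 0$ with a power rate $r^{2\alpha}$ whenever $x_{0}$ belongs to a certain open, full-measure \emph{regular set} $O\subset\Omega$. By a standard Campanato-type argument, such a decay yields $\D u\in\hold_{\locc}^{0,\alpha}(O;\R^{N\times n})$ and hence $u\in\hold_{\locc}^{1,\alpha}(O;\R^{N})$; the exceptional set $\Omega\setminus O$ is then precisely the complement of the Lebesgue points of $\D u$ where the excess is sufficiently small relative to $|(\D u)_{x_{0},r}|$, and therefore it is $\mathscr{L}^{n}$-negligible.

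The first substantive step is a \emph{Caccioppoli inequality of the second kind}: for affine maps $\ell\colon\R^{n}\to\R^{N}$ and concentric balls $\ball(x_{0},r/2)\subset\ball(x_{0},r)\Subset\Omega$,
\begin{align*}
\dashint_{\ball(x_{0},r/2)}\bigl|V_{p}(\D u-\D\ell)\bigr|^{2}\dif x \leq C\dashint_{\ball(x_{0},r)}\Bigl|V_{p}\Bigl(\tfrac{u-\ell}{r}\Bigr)\Bigr|^{2}\dif x.
\end{align*}
This is obtained by plugging the competitor $u-\eta(u-\ell)$ (with $\eta$ a standard cutoff) into the local minimality \eqref{eq:local}, expanding and using either \eqref{eq:weakerStrongQC} (for $p\geq 2$) or the $p$-strong quasiconvexity together with Lemma~\ref{lem:compare}\,\ref{item:compare1} (for $1<p<2$) to bound the left-hand side. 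A hole-filling/Widman-type iteration absorbs the remainder.

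The core of the argument is an \emph{$H''$-harmonic approximation}: there exists a modulus $\omega(\ep)\to 0$ such that, provided $\mathcal{E}(x_{0},r)$ is small compared to $1+|(\D u)_{x_{0},r}|^{p}$, one can find a solution $h$ of the linear, constant-coefficient system $\di\bigl(H''((\D u)_{x_{0},r})\D h\bigr)=0$ on $\ball(x_{0},r/2)$ that is $L^{2}$-close to $(u-\ell_{x_{0},r})/r$ on the same ball. This is established by a blow-up and contradiction argument: rescale $u$ by $V_{p}$-normalisation, extract a weak limit, and identify the limit via the strong quasiconvexity condition (which forces Legendre-Hadamard ellipticity of $H''$ in the limit). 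The limiting problem is linear elliptic with constant coefficients and therefore satisfies the classical $C^{1,1}$-estimate $\fint_{\ball(x_{0},\tau r)}|\D h-(\D h)_{x_{0},\tau r}|^{2}\dif x \leq C\tau^{2}\fint_{\ball(x_{0},r/2)}|\D h-(\D h)_{x_{0},r/2}|^{2}\dif x$. Transferring this decay back to $u$ via the approximation and the Caccioppoli inequality yields an iterable improvement $\mathcal{E}(x_{0},\tau r)\leq\tau^{2\alpha}\mathcal{E}(x_{0},r)$ for some $\tau\in(0,1)$ and any prescribed $\alpha\in(0,1)$, provided the smallness hypothesis is preserved, which is checked by absorbing.

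The hard part will be the harmonic approximation for $1<p<2$: in the small-gradient regime the natural quantity is the shifted $N$-function $(V_{p})_{|(\D u)_{x_{0},r}|}$ rather than the bare $L^{2}$-norm, and the blow-up requires a delicate rescaling that simultaneously accommodates both the quadratic and the $p$-subquadratic scales encoded in $V_{p}$. Here the facts \ref{item:F1}--\ref{item:F2} on shifted $N$-functions, together with Lemma~\ref{lem:compare}\,\ref{item:compare2}, are the technical backbone: they allow the comparison between $V_{p}(\D u-\D\ell)$ and the corresponding linearised quadratic form to be carried out uniformly. Once the decay of $\mathcal{E}$ is established, Campanato's characterisation of Hölder spaces produces $u\in\hold_{\locc}^{1,\alpha}(O;\R^{N})$ for every $\alpha\in(0,1)$, completing the proof.
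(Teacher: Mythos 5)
The paper does not prove this proposition at all: it is imported as a black box from Acerbi--Fusco \cite{AF1} (case $p\geq 2$) and Carozza--Fusco--Mingione \cite{CFM98} (case $1<p<2$), and the whole point of Theorem~\ref{thm:main1} is to reduce the $\A$-setting to it. Your outline --- Caccioppoli inequality of the second kind, blow-up/harmonic approximation against the linearised constant-coefficient system $\di(H''((\D u)_{x_{0},r})\D h)=0$, iterated excess decay, Campanato --- is exactly the strategy of those two references, so in substance you are reproducing the external proof the paper relies on; note though that as written it is a roadmap rather than a proof, since the genuinely delicate steps (the second-kind Caccioppoli inequality without growth assumptions on $H''$, the compactness and identification of the blow-up limit in the subquadratic regime, and the propagation of the smallness condition under iteration) are named but not carried out. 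One small slip: with the paper's scalar $V_{p}(z)=(1+|z|^{2})^{p/2}-1$, the quantity $|V_{p}(\D u-(\D u)_{x_{0},r})|^{2}$ behaves like $|z|^{4}$ near zero and $|z|^{2p}$ at infinity, which is not the excess of \cite{CFM98}; you want either $\dashint_{\ball(x_{0},r)}V_{p}(\D u-(\D u)_{x_{0},r})\dif x$ or the squared modulus of the vector-valued auxiliary function $(1+|\xi|^{2})^{(p-2)/4}\xi$. This does not affect the validity of the strategy.
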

We now come to the 
\begin{proof}[Proof of Theorem~\ref{thm:main1}, Case $2\leq p <\infty$]
By \eqref{eq:essentialreduction} \emph{ff.}, we may assume that $V=\R^{N}$ and $W=\mathscr{R}(\A)\subset\R^{N\times n}$. Adopting the terminology of \eqref{eq:AOpdef}, we put
\begin{align}\label{eq:Gdef}
G\colon \R^{N\times n}\ni z \mapsto F(\pi_{\A}(z))\in\R.
\end{align}
Since $F\in\hold^{2}(\mathscr{R}(\A))$ by \ref{item:H1} and the operator $\pi_{\A}\colon\R^{N\times n}\to\R^{N\times n}$ is linear, $G\in\hold^{2}(\R^{N\times n})$. Moreover, by \ref{item:H2} we have $|G(z)|\leq C(1+|z|^{p})$ for all $z\in\R^{N\times n}$ and some constant $C>0$. By \eqref{eq:pstrongEQUIV} and analogously to \eqref{eq:weakerStrongQC},
$p\geq 2$ implies that
\begin{align}\label{eq:pstrongEQUIV1}
\frac{\ell}{2\theta_{p}}\int_{(0,1)^{n}}|\A\varphi|^{2}+|\A\varphi|^{p}\dif x \leq \int_{(0,1)^{n}}F(z+\A\varphi)-F(z)\dif x
\end{align}
holds for all $z\in\mathscr{R}(\A)$ and $\varphi\in\hold_{c}^{1}((0,1)^{n};\R^{N})$; here, $\theta_{p}>0$ is the constant from Lemma~\ref{lem:compare}~\ref{item:compare1}. We apply Proposition~\ref{prop:Korn} to the particular choice $\psi(t):=t^{2}+t^{p}$, which is easily seen to satisfy the assumptions of Proposition~\ref{prop:Korn}. In the following, we hereafter denote $c>0$ the constant from \eqref{eq:modular} with this particular choice of $\psi$, $V=\R^{N}$ and $W=\mathscr{R}(\A)$. Then we have for all $\varphi\in\hold_{c}^{\infty}((0,1)^{n};\R^{N})$ and $z\in\R^{N\times n}$:
\begin{align*}
\int_{(0,1)^{n}}|\nabla\varphi|^{2}+|\nabla\varphi|^{p}\dif x \;\;\;\;\;\;&\!\!\!\!\!\!\!\!\stackrel{\text{Prop.}~\ref{prop:Korn}}{\leq} c \int_{(0,1)^{n}}|\A\varphi|^{2}+|\A\varphi|^{p}\dif x \\ 
& \!\!\!\!\!\!\!\!\!\!\stackrel{\mathrm{\ref{item:H3}},\,\eqref{eq:pstrongEQUIV1}}{\leq} \frac{2c\,\theta_{p}}{\ell}\int_{(0,1)^{n}}F(\pi_{\A}(z)+\A\varphi)-F(\pi_{\A}(z))\dif x \\ & \!\! = \frac{2c\,\theta_{p}}{\ell}\int_{(0,1)^{n}}G(z+\nabla\varphi)-G(z)\dif x, 
\end{align*}
since $\pi_{\A}(\nabla\varphi)=\A\varphi$ and $\pi_{\A}$ is linear. In conclusion, $G$ satisfies the hypotheses of Proposition~\ref{prop:CFM}~\ref{item:reductionreg1} with $\mu=\frac{\ell}{2c\theta_{p}}$. Hence all local minima of $w\mapsto \int G(\nabla w)\dif x$ are $\hold_{\locc}^{1,\alpha}$-partially regular. Since for any open $\omega\Subset\Omega$ and all $u\in\sobo^{1,p}(\Omega;\R^{N})$ there holds 
\begin{align*}
\mathscr{G}[u;\omega] := \int_{\omega}G(\nabla u)\dif x=\int_{\omega}F(\A u)\dif x = \mathscr{F}[u;\omega],
\end{align*}
any local minimiser of $\mathscr{F}$ is a local minimiser for $\mathscr{G}$ and so is $\hold_{\locc}^{1,\alpha}$-partially regular itself. The proof is complete. 
\end{proof}
\begin{proof}[Proof of Theorem~\ref{thm:main1}, case  $1<p<2$]
We proceed similarly as in the case $2\leq p<\infty$ and define $G\colon\R^{N\times n}\to\R$ by \eqref{eq:Gdef}. We now have to take care of the requisite quasiconvexity condition which here takes a slightly different form. Namely, by Lemma~\ref{lem:compare}~\ref{item:compare1} we need to establish that there exists $\nu>0$ such that 
\begin{align}\label{eq:SQCforG}
\int_{(0,1)^{n}}G(z+\nabla\varphi)-G(z)\dif x \geq \nu\int_{(0,1)^{n}}(1+|z|^{2}+|\nabla\varphi|^{2})^{\frac{p-2}{2}}|\nabla\varphi|^{2}\dif x
\end{align} 
holds for all $\varphi\in\hold_{c}^{1}((0,1)^{n};\R^{N})$ and $z\in\R^{N\times n}$. Note that, by definition of $G$ and the $p$-strong $\mathbb{A}$-quasiconvexity of $F$, \eqref{eq:SQCforG} will follow once we establish the existence of some $\mu>0$ such that 
\begin{align}\label{eq:KornPRmain}
\begin{split}
\int_{(0,1)^{n}}(1+|z|^{2}+&|\nabla\varphi|^{2})^{\frac{p-2}{2}} |\nabla\varphi|^{2}\dif x \\ & \leq \mu \int_{(0,1)^{n}}(1+|\pi_{\A}(z)|^{2}+|\A\varphi|^{2})^{\frac{p-2}{2}}|\A\varphi|^{2}\dif x
\end{split}
\end{align}
holds for all $\varphi\in\hold_{c}^{1}((0,1)^{n};\R^{N})$ and $z\in\R^{N\times n}$. Let $\varphi\in\hold_{c}^{\infty}((0,1)^{n};\R^{N})$ be arbitrary but fixed. Since $|\pi_{\A}(z)|\leq c|z|$ for some constant $c>0$, we utilise $p<2$ to obtain for all $x\in (0,1)^{n}$
\begin{align}\label{eq:pointwise}
\begin{split}
(1+|z|^{2}+|\nabla\varphi(x)|^{2})^{\frac{p-2}{2}}& |\nabla\varphi(x)|^{2} \\ & \leq \widetilde{c}(1+|\pi_{\A}(z)|^{2}+|\nabla\varphi(x)|^{2})^{\frac{p-2}{2}}|\nabla\varphi(x)|^{2}. 
\end{split}
\end{align}
Now define the auxiliary function $\Psi\colon[0,\infty)\to [0,\infty)$ given by 
\begin{align*}
\Psi(t):=(1+t)^{p-2}t^{2},\qquad t\geq 0
\end{align*}
as in Lemma~\ref{lem:compare}. Clearly, since $p>1$, $\Psi$ is of class $\Delta_{2}\cap\nabla_{2}$. By Lemma~\ref{lem:compare}\ref{item:compare2}, we may invoke \ref{item:F2} to obtain that the $\Delta_{2}$- and $\nabla_{2}$-constants of the shifted functions $\Psi_{a}$ (see \eqref{eq:shifted} for the definition) can be bounded independently of $a\geq 0$. Hence, by Proposition~\ref{prop:Korn} applied to $\Psi_{a}$ with $a:=|\pi_{\A}(z)|$ we obtain 
\begin{align}\label{eq:KornPR}
\int_{(0,1)^{n}}\Psi_{|\pi_{\A}(z)|}(|\nabla \varphi|)\dif x \leq c \int_{(0,1)^{n}}\Psi_{|\pi_{\A}(z)|}(|\A\varphi|)\dif x 
\end{align}
for all $\varphi\in\hold_{c}^{\infty}((0,1)^{n};\R^{N})$, where $c=c(\A,\Delta_{2}(\Psi),\nabla_{2}(\Psi))>0$. Now, we have
\begin{align}\label{eq:comparesubquadratic}
\Psi_{a}(t) \stackrel{\ref{item:F1}}{\simeq} \Psi''(a+t)t^{2} \stackrel{\eqref{eq:PsiBounds}_{1}}{\simeq} (1+a^{2}+t^{2})^{\frac{p-2}{2}}t^{2}
\end{align}
for all $t\geq 0$, where the constants implicit in '$\simeq$' do not depend on $a$. Therefore, 
\begin{align*}
\int_{(0,1)^{n}}(1+|z|^{2}+|\nabla\varphi|^{2})^{\frac{p-2}{2}}& |\nabla\varphi|^{2}\dif x  \\ & \stackrel{\eqref{eq:pointwise}}{\leq}\widetilde{c}\int_{(0,1)^{n}}(1+|\pi_{\A}(z)|^{2}+|\nabla\varphi|^{2})^{\frac{p-2}{2}}|\nabla\varphi|^{2}\dif x \\ 
& \stackrel{\eqref{eq:comparesubquadratic}}{\leq} \widetilde{\widetilde{c}} \int_{(0,1)^{n}}\Psi_{|\pi_{\A}(z)|}(|\nabla\varphi|)\dif x \\ 
& \stackrel{\eqref{eq:KornPR}}{\leq} \widetilde{C} \int_{(0,1)^{n}}\Psi_{|\pi_{\A}(z)|}(|\A\varphi|)\dif x \\
& \stackrel{\eqref{eq:comparesubquadratic}}{\leq} \widetilde{\widetilde{C}}\int_{(0,1)^{n}}(1+|\pi_{\A}(z)|^{2}+|\A\varphi|^{2})^{\frac{p-2}{2}}|\A\varphi|^{2}\dif x,
\end{align*}
where $\widetilde{\widetilde{C}}>0$ still does not depend on $z$ or $\pi_{\A}(z)$, respectively. This is \eqref{eq:KornPRmain} and yields that $G\colon\R^{N\times n}\to\R$ is $p$-strongly quasiconvex. As in the case $p\geq 2$, $G\in\hold^{2}(\R^{N})$ and obeys the growth bound $|G(z)|\leq c(1+|z|^{p})$ for all $z\in\R^{N\times n}$ and some $c>0$. Now we invoke Proposition~\ref{prop:CFM}~\ref{item:reductionreg2} to obtain that all local minima of $w\mapsto \int G(\nabla w)\dif x$ are partially $\hold_{\locc}^{1,\alpha}$-regular. As above in the case $p\geq 2$, this inherits to all local minima of $\mathscr{F}$, and the proof is complete. 
\end{proof} 
From here and based on \eqref{eq:dimnote}\emph{ff.}, it is straightforward to conclude Theorem~\ref{thm:main1} for general vector spaces $V$ and $W$; the elementary reduction scheme is explained in the Appendix, Section~\ref{sec:appendix}.
\subsection{Coerciveness and existence of minima}
Theorem~\ref{thm:main1} establishes the partial regularity of local minima of variational integrals \eqref{eq:functional}; we now briefly address their existence. For the sequel, denote following \cite{Gmeineder1a}
\begin{align*}
\sobo^{\A,p}(\Omega):=\{v\in\lebe^{p}(\Omega;V)\colon\;\A v\in\lebe^{p}(\Omega;W)\}
\end{align*}
and define $\sobo_{0}^{\A,p}(\Omega)$ as the closure of $\hold_{c}^{\infty}(\Omega;V)$ with respect to $\|u\|_{\A,p}:=\|u\|_{\lebe^{p}}+\|\A u\|_{\lebe^{p}}$. We now have the following
\begin{lemma}\label{lem:coercive}
Let $\A$ be an elliptic differential operator of the form \eqref{eq:form} and $\mathscr{A}$ an annihilator of $\A$. Moreover, let $F\in\hold(W)$ be a variational integrand that satisfies \emph{\ref{item:H2}} and \emph{\ref{item:H3}} for some $1<p<\infty$. Then for any open and bounded set $\Omega\subset\R^{n}$ and $u_{0}\in\sobo^{\A,p}(\Omega)$ the variational principle 
\begin{align}
\text{to minimise}\;\;\;\mathscr{F}[u;\Omega]:=\int_{\Omega}F(\A u)\dif x \qquad \text{over}\;u\in u_{0}+\sobo_{0}^{\A,p}(\Omega),
\end{align}
has a solution, and this solution is a local minimiser in the sense of \eqref{eq:local}.
\end{lemma}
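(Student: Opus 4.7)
The plan is to execute the direct method on the convex affine subset $u_0+\sobo_0^{\A,p}(\Omega)$ of the reflexive Banach space $(\sobo^{\A,p}(\Omega),\|\cdot\|_{\A,p})$. The two ingredients are sequential weak lower semicontinuity of $\mathscr{F}$ with respect to the convergence $\A u_k\rightharpoonup \A u$ in $\lebe^p$, and coerciveness on the admissible class. The former follows from \ref{item:H3} via the Fonseca--M\"uller theorem cited in the introduction, while the latter rests on the integral form \eqref{eq:pstrongEQUIV} of strong $\mathscr{A}$-quasiconvexity combined with Proposition~\ref{prop:Korn}.

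For weak lower semicontinuity I split $F=(F-\ell V_p)+\ell V_p$: the first summand is $\mathscr{A}$-quasiconvex by \ref{item:H3} and obeys the $p$-growth bound because of \ref{item:H2} and $V_p(z)\leq c(1+|z|^p)$; the second is convex and therefore a fortiori $\mathscr{A}$-quasiconvex. The Fonseca--M\"uller theorem applies to each piece and yields that $\mathscr{F}$ is sequentially lower semicontinuous along sequences with $\A v_j\rightharpoonup \A v$ in $\lebe^p(\Omega;W)$.

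For coerciveness I would first test \eqref{eq:pstrongEQUIV} with $z$ constant (by scaling it holds on any cube) and combine it with a covering/localization argument: cover $\Omega$ by finitely many disjoint small cubes $Q_i$, freeze $\A u_0$ at its mean $z_i=(\A u_0)_{Q_i}$ on $Q_i$, and absorb the resulting defect using continuity of $F$ (from \ref{item:H1},\ref{item:H2}) and $\lebe^p$-equi-integrability of $\A u_0$. This yields for every $\varphi\in\hold_c^\infty(\Omega;V)$ an estimate
\begin{align*}
\mathscr{F}[u_0+\varphi;\Omega]\geq \mathscr{F}[u_0;\Omega]+c\int_\Omega V_p(\A\varphi)\dif x -C(u_0,\Omega)
\end{align*}
with $c=c(\ell,\theta_p)>0$. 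Using $V_p(t)\geq |t|^p-1$ then bounds $\|\A\varphi\|_{\lebe^p(\Omega)}^p$ by $\mathscr{F}[u_0+\varphi;\Omega]$ plus a constant depending only on $u_0$ and $\Omega$; Proposition~\ref{prop:Korn} applied to $\psi(t)=t^p\in\Delta_2\cap\nabla_2$ (extending $\varphi$ by zero) together with Poincar\'e's inequality upgrade this to a bound on the full $\sobo^{\A,p}$-norm of $\varphi$. Density of $\hold_c^\infty(\Omega;V)$ in $\sobo_0^{\A,p}(\Omega)$ and growth-continuity of $\mathscr{F}$ extend the estimate to every $\varphi\in\sobo_0^{\A,p}(\Omega)$.

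A minimizing sequence $(u_k)\subset u_0+\sobo_0^{\A,p}(\Omega)$ is then bounded in $\sobo^{\A,p}$ and admits a weak subsequential limit $u^*$; the admissible class is convex and norm-closed, hence weakly closed, so $u^*\in u_0+\sobo_0^{\A,p}(\Omega)$, and by lower semicontinuity $u^*$ is a minimizer. The local-minimizer property \eqref{eq:local} then comes for free: for open $\omega\subset\Omega$ and $\psi\in\hold_c^\infty(\omega;V)$, extending $\psi$ by zero yields $u^*+\psi\in u_0+\sobo_0^{\A,p}(\Omega)$, and since $\psi\equiv 0$ outside $\omega$ global minimality localizes to $\omega$. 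The main obstacle throughout is the coerciveness step, specifically the transfer of strong $\mathscr{A}$-quasiconvexity from constant states $z$ to the variable background field $\A u_0$; the cube-averaging localization is standard in spirit but needs careful error bookkeeping, after which the remainder is routine direct-method machinery.
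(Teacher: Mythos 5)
Your overall skeleton (direct method: coerciveness, reflexivity, weak closedness of the convex norm-closed Dirichlet class, and Fonseca--M\"uller lower semicontinuity, with the local-minimality property obtained by localising the global one) matches the paper, and your splitting $F=(F-\ell V_{p})+\ell V_{p}$ for the lower semicontinuity is a reasonable way to organise what the paper simply cites as a variant of \cite[Thm.~3.7]{FoMu}. The problem is the coerciveness step, which is where you yourself locate the main obstacle. Your plan --- cover $\Omega$ by cubes $Q_{i}$, freeze $z_{i}=(\A u_{0})_{Q_{i}}$, and apply \eqref{eq:pstrongEQUIV} on each cube --- cannot be started as described: the strong $\mathscr{A}$-quasiconvexity inequality \eqref{eq:pstrongEQUIV} only admits test functions $\varphi\in\hold_{c}^{1}(Q_{i};V)$ \emph{compactly supported in the cube}, whereas the restriction $\varphi|_{Q_{i}}$ of a global perturbation is not such a function. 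Introducing cut-offs near $\partial Q_{i}$ produces boundary-layer terms involving $\varphi\otimes\nabla\eta_{i}$ whose control requires equi-integrability of $|\A\varphi|^{p}$ along the (not yet bounded) minimising sequence --- exactly the a priori information that coerciveness is supposed to deliver. So the ``careful error bookkeeping'' is not a technicality; as stated the argument is circular.

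The paper avoids freezing altogether. Since $\varphi\in\hold_{c}^{\infty}(\Omega;V)$ \emph{is} an admissible test function on $U=\Omega$, one applies \eqref{eq:Aqc} once, at the single state $w=0$, to get
\begin{align*}
F(0)\mathscr{L}^{n}(\Omega)+\ell\int_{\Omega}V_{p}(\A\varphi)\dif x\leq\int_{\Omega}F(\A\varphi)\dif x ,
\end{align*}
and then passes from $\int_{\Omega}F(\A\varphi)\dif x$ to $\int_{\Omega}F(\A(u_{0}+\varphi))\dif x$ not by quasiconvexity but by the pointwise local Lipschitz bound $|F(w)-F(z)|\leq c(1+|w|^{p-1}+|z|^{p-1})|w-z|$, which follows from separate convexity of $F$ along the $\A$-rank-one cone $\mathscr{C}(\A)$ (a consequence of \ref{item:H3}) together with \ref{item:H2}, in the spirit of \cite[Lem.~5.5]{Giusti}. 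Young's inequality then absorbs the resulting $\varepsilon\int_{\Omega}|\A\varphi|^{p}\dif x$ into the left-hand side, giving the coercivity bound with constants depending only on $u_{0}$, $\Omega$ and the data in \ref{item:H2}--\ref{item:H3}. This Lipschitz comparison is the idea missing from your proposal; once you replace the cube-freezing step by it, the rest of your argument (Korn via Proposition~\ref{prop:Korn} with $\psi(t)=t^{p}$, Poincar\'{e}, weak compactness, weak closedness, lower semicontinuity, localisation) goes through as in the paper.
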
 
\begin{proof} 
By \ref{item:H3} and the equivalence of \eqref{eq:calAqc} and \eqref{eq:Aqc} by virtue of Lemma~\ref{lem:potentials}, we deduce that for all $\varphi\in\hold_{c}^{\infty}(\Omega;V)$ there holds 
\begin{align}\label{eq:lowerbound1}
F(0)\mathscr{L}^{n}(\Omega) + \ell\int_{\Omega}V_{p}(|\A\varphi|)\dif x \leq \int_{\Omega}F(\A\varphi)\dif x
\end{align}
with the function $V_{p}$ as in Lemma~\ref{lem:compare}. Now, since $F$ satisfies \ref{item:H3}, it is convex with respect to directions contained in the $\A$-rank-one cone $\mathscr{C}(\A)$ (cf.~\eqref{eq:puretensors}), which in turn spans $\mathscr{R}(\A)$. In combination with \ref{item:H2}, a straightforward adaptation of \cite[Lem.~5.5]{Giusti} thus yields that there exists a constant $c>0$ such that 
\begin{align}\label{eq:rankoneLipbound}
|F(w)-F(z)|\leq c(1+|w|^{p-1}+|z|^{p-1})|w-z| 
\end{align}
for all $w,z\in\mathscr{R}(\A)$. Note that $c>0$ only depends on the parameters implicit in \ref{item:H2} and \ref{item:H3}. By definition of $\sobo_{0}^{\A,p}(\Omega)$, this inequality directly yields that \eqref{eq:lowerbound1} holds for $\varphi\in\sobo_{0}^{\A,p}(\Omega)$, too. Since $|\cdot|^{p}-1\leq V_{p}(\cdot)$, for any $\varphi\in\sobo_{0}^{\A,p}(\Omega)$ we have
\begin{align}\label{eq:QCestimate1}
\begin{split}
\ell\int_{\Omega}|\A\varphi|^{p}-1\dif x & \leq \ell\int_{\Omega}V_{p}(\A\varphi)\dif x \\
& \!\!\!\!\stackrel{\eqref{eq:lowerbound1}}{\leq} \int_{\Omega}F(\A\varphi)\dif x - F(0)\mathscr{L}^{n}(\Omega)\\ 
& \leq \int_{\Omega}|F(\A(u_{0}+\varphi))-F(\A\varphi)|\dif x  \\ 
& + \int_{\Omega}F(\A(u_{0}+\varphi))\dif x - F(0)\mathscr{L}^{n}(\Omega)\\ 
& \!\!\!\!\stackrel{\eqref{eq:rankoneLipbound}}{\leq} c\int_{\Omega}(1+|\A\varphi|^{p-1}+|\A u_{0}|^{p-1})|\A u_{0}|\dif x  \\ & + \Big(\int_{\Omega}F(\A(u_{0}+\varphi))\dif x - F(0)\mathscr{L}^{n}(\Omega)\Big) =: \mathrm{I}+\mathrm{II}.  
\end{split}
\end{align}
At this stage, we employ Young's inequality to bound 
\begin{align}\label{eq:QCestimate2}
\mathrm{I} \leq \varepsilon \int_{\Omega}|\A\varphi|^{p}\dif x + c(\varepsilon)\int_{\Omega}|\A u_{0}|+|\A u_{0}|^{p}\dif x 
\end{align}
for $\varepsilon>0$, and consequently choose and fix $0<\varepsilon<\ell$ so that the first term on the right-hand side of \eqref{eq:QCestimate2} can be absorbed into the very left-hand side of \eqref{eq:QCestimate1}. As a consequence, there exists $c>0$ (which only depends on $\mathscr{L}^{n}(\Omega)$, $\A$ and the parameters underling hypotheses \ref{item:H2} and \ref{item:H3})  such that 
\begin{align}
\int_{\Omega}|\A u|^{p}\dif x \leq c\Big(\int_{\Omega}F(\A u)\dif x + \int_{\Omega}|\A u_{0}|+|\A u_{0}|^{p}\dif x+1\Big)
\end{align}
holds for all $u\in u_{0}+\sobo_{0}^{\A,p}(\Omega)$. In conclusion, $\mathscr{F}[-;\Omega]$ is bounded below on $u_{0}+\sobo_{0}^{\A,p}(\Omega)$. Since $\Omega$ is bounded, the usual Poincar\'{e} inequality and Proposition~\ref{prop:Korn} imply
\begin{align*}
\int_{\Omega}|u|^{p}\dif x & \leq c(p)\int_{\Omega}|u-u_{0}|^{p}\dif x + c(p)\int_{\Omega}|u_{0}|^{p} \\ 
& \leq c(\A,p,\Omega)\int_{\Omega}|\A (u-u_{0})|^{p} + c(p)\int_{\Omega}|u_{0}|^{p}. 
\end{align*} 
Hence, letting $(u_{j})\subset u_{0}+\sobo_{0}^{\A,p}(\Omega)$ be a minimising sequence for $\mathscr{F}[-;\Omega]$, we utilise $1<p<\infty$ and hereafter reflexivity of $\sobo^{\A,p}(\Omega)$ to find that a suitable non-relabelled subsequence converges weakly to some $u\in\sobo^{\A,p}(\Omega)$ in $\sobo^{\A,p}(\Omega)$. Now, since $u_{0}+\sobo_{0}^{\A,p}(\Omega)$ is a convex subset of the Banach space $\sobo^{\A,p}(\Omega)$ and, by definition of $\sobo_{0}^{\A,p}(\Omega)$, closed with respect to the norm topology on $\sobo^{\A,p}(\Omega)$, it is weakly closed. Therefore, $u\in u_{0}+\sobo_{0}^{\A,p}(\Omega)$. Finally, since $\A u_{j}\rightharpoonup \A u$ in $\lebe^{p}(\Omega;W)$, \ref{item:H1}--\ref{item:H3} imply by virtue of a straightforward higher order variant of \cite[Thm.~3.7]{FoMu} that 
\begin{align*}
\mathscr{F}[u;\Omega]\leq \liminf_{j\to\infty}\mathscr{F}[u_{j};\Omega]. = \inf_{u_{0}+\sobo_{0}^{\A,p}(\Omega)}\mathscr{F}[-;\Omega]. 
\end{align*}
Hence $u$ is a minimiser for $\mathscr{F}[-;\Omega]$, and the proof is complete. 
\end{proof}
In the gradient case, a slight variant of \ref{item:H3} is even equivalent to coerciveness, cf.~\textsc{Chen \& Kristensen} \cite{ChKr17}. We conclude this section with two remarks. 
\begin{remark}\label{rem:traces}
Note that, even for domains $\Omega\subset\R^{n}$ with smooth boundary $\partial\Omega$, elliptic operators $\A$ and $1<p<\infty$, there need not exist a boundary trace operator $\trace\colon\sobo^{\A,p}(\Omega)\to\lebe_{\locc}^{1}(\partial\Omega;V)$, cf.~\cite{BDG,Gmeineder1a}. Hence it is not directly possible to reduce the weak closedness of the Dirichlet classes in the proof of Lemma~\ref{lem:coercive} to the continuity properties of a respective trace operator. 
\end{remark}
\begin{remark}
Both Theorem~\ref{thm:main1} and Lemma~\ref{lem:coercive} exclude various constant rank operators $\mathscr{A}$ which do not have elliptic potentials, so e.g. $\mathscr{A}=\di$ (in which case $\A=\curl$). However, such operators do not give rise to partial regularity in the classical sense and, by direction~$\ref{item:THMpr}\Rightarrow\ref{item:THMelliptic}$ of Theorem~\ref{thm:main1}, the best to be expected is a $\hold^{0,\alpha}$-partial regularity result for $\A u$. Since only certain combinations of derivatives can potentially be proven to be partially $\hold^{0,\alpha}$-regular, this may be referred to a \emph{partial} partial regularity result, and we shall pursue this elsewhere. 
\end{remark} 
\section{Examples and extensions}\label{sec:examples}
\subsection{Examples}\label{sec:examples1}
To underline the applications of Theorem~\ref{thm:main1}, we explicitely address some examples of elliptic operators that frequently occur in applications; Theorem~\ref{thm:main1} or Theorems~\ref{thm:fullyautonomous} then provide the corresponding partial regularity results for the respective minimisers. 
\begin{enumerate}
\item\label{item:EX1} \emph{The symmetric gradient.} For $n\geq 1$, $V=\R^{n}$, $W=\rsym$, we put as in the introduction $\sg(u):=\frac{1}{2}(Du+Du^{\top})$. Setting $a\odot b:=\frac{1}{2}(a\otimes b + b\otimes a)$ for $a,b\in\R^{n}$, the elementary inequality 
\begin{align*}
\frac{1}{\sqrt{2}}|v|\,|\xi|\leq |v\odot\xi| = |\sg[\xi]v|\qquad\text{for all}\;v,\xi\in\R^{n}
\end{align*}
implies that $\sg$ is elliptic; also see \cite[Prop.~6.4]{Va11}.
\item\label{item:EX2} \emph{The trace-free symmetric gradient.} For $V=\R^{n}$, $W=\R_{\mathrm{sym,tf}}^{n\times n}:=\{z\in\rsym\colon\;\mathrm{tr}(z)=0\}$, we put $\sg^{D}(u):=\sg(u)-\frac{1}{n}\di(u)\mathbbm{1}_{n}$ with the $(n\times n)$-unit matrix $\mathbbm{1}_{n}\in\R^{n\times n}$. By \cite[Ex.~2.2(c)]{BDG}, $\sg^{D}$ is elliptic if and only if $n\geq 2$.  
\item\label{item:EX3} \emph{The exterior derivative.} For $\ell\in\{1,...,n-1\}$ and $V=\bigwedge^{\ell}\R^{n}$, $W=\bigwedge^{\ell+1}\R^{n}\times\bigwedge^{\ell-1}\R^{n}$, define $\A=(d,d^{*})$ as the first order differential operator whose symbol for $\xi\in\R^{n}$ and $v\in V$ is given by 
\begin{align*}
\A[\xi]v := (\xi\wedge v, *(\xi\wedge *v)). 
\end{align*}
By \cite[Prop.~6.6]{Va11}, this operator is elliptic, and so Theorem~\ref{thm:main1} complements the theme of partial regularity for differential forms as developed in \cite{BeckStroffolini13}.
\item\label{item:EX4} \emph{The div-curl-operator.} For $V=\R^{3}$ and $W=\R^{4}$, we define 
\begin{align*}
\A=\left(\begin{matrix} \di \\ \curl \end{matrix}\right)\colon u = (u_{1},u_{2},u_{3}) \mapsto \left(\begin{matrix} \partial_{1}u_{1}+\partial_{2}u_{2}+\partial_{3}u_{3} \\ \partial_{2}u_{3}-\partial_{3}u_{2} \\ \partial_{3}u_{1}-\partial_{1}u_{3} \\ \partial_{1}u_{2}-\partial_{2}u_{1}\end{matrix}\right). 
\end{align*}
It is then easy to see that $\A[\xi]v=0$ for $\xi\in\R^{3}\setminus\{0\}$ implies that $v=0$ and so $\A$ is elliptic. 
\end{enumerate}
Higher order elliptic operators, which Theorem~\ref{thm:higherorder} from below appeals to, can be canonically obtained by composing lower order elliptic differential operators. An example of different sort yet particular interest is given by 
\begin{itemize}
\item[(e)] \emph{The splitted Laplace-Beltrami operator.} Let $n\geq 2$, $\ell\in\{1,...,n-1\}$ and $V=\bigwedge^{\ell}\R^{n}$, $W:=\bigwedge^{\ell}\R^{n}\times\bigwedge^{\ell}\R^{n}$. The operator $\A u:=(dd^{*}u,d^{*}du)$ for $u\colon\R^{n}\to \bigwedge^{\ell}\R^{n}$ then is a second order differential operator on $\R^{n}$ from $V$ to $W$, and is elliptic because of $\Delta = dd^{*}+d^{*}d$ (also see \cite[Prop.~6.11]{Va11}).
\end{itemize}

\subsection{Extensions} 
We conclude the paper by discussing extensions of Theorem~\ref{thm:main1}, where we especially address  fully non-autonomous integrands or higher order differential operators, respectively. To emphasize the essentials, we now focus on the power growth case with $p\geq 2$.

These generalisations further manifest the metaprinciple that any regularity result being valid for $p$-strongly quasiconvex integrals also inherits to the situation of differential operators subject to the relevant strong $\mathscr{A}$-quasiconvexity condition.
\subsubsection{Fully non-autonomous integrands}\label{sec:fullynonaut}
Let $p\geq 2$ and $\A$ be an elliptic differential operator of the form \eqref{eq:form}; further, let $\mathscr{A}$ be an annihilator in the sense of \eqref{eq:complex}. Given an open and bounded set $\Omega\subset\R^{n}$,  we let $F\colon\Omega\times V\times \mathscr{R}(\A) \ni (x,y,z) \mapsto F(x,y,z)\in\R$ be a continuous integrand which satisfies the following set of hypotheses: 
\begin{enumerate}[label={(H\arabic{*}'')},start=1]
\item\label{item:H1'} $\mathrm{D}_{zz}^{2}F\in\hold(\Omega\times V\times\mathscr{R}(\A))$.
\item\label{item:H2'} There exists $c>0$ such that $|F(x,y,z)|\leq c(1+|z|^{p})$ holds for all $(x,y,z)\in \Omega\times V\times \mathscr{R}(\A)$. 
\item\label{item:H3'} There exist $c>0$, $0<\sigma<\frac{1}{p}$ and a bounded, concave and increasing function $\omega\colon\R_{\geq 0}\to\R_{\geq 0}$ such that $\omega(t)\leq t^{\sigma}$ for $t\geq 0$ and 
\begin{align*}
|F(x,y,z)-F(x',y',z)| \leq c(1+|z|^{p})\omega(|x-x'|^{p}+|y-y'|^{p})
\end{align*}
hold for all $x,x'\in\Omega$, $y,y'\in V$ and $z\in\mathscr{R}(\A)$. 
\item\label{item:H4'} There exists $\ell>0$ such that for every $(x,y,z)\in\Omega\times V \times\mathscr{R}(\A)$ and every $\varphi\in\hold_{c}^{1}((0,1))^{n};V)$ there holds 
\begin{align*}
\ell\int_{(0,1)^{n}}|\A\varphi|^{2}+|\A\varphi|^{p}\dif\eta \leq \int_{(0,1)^{n}}F(x,y,z+\A\varphi(\eta))-F(x,y,z)\dif\eta. 
\end{align*}
\item\label{item:H5'} $F$ can be uniformly minorised by a continuous function that is $p$-strongly $\mathscr{A}$-quasiconvex at zero: There exists $\widetilde{F}\in\hold(\mathscr{R}(\A))$ with 
\begin{align*}
\gamma\int_{(0,1)^{n}}|\A\varphi|^{p}\dif x \leq \int_{(0,1)^{n}}\widetilde{F}(\A\varphi)-\widetilde{F}(0)\dif x\qquad\text{for all}\;\varphi\in\hold_{c}^{1}((0,1)^{n};V)
\end{align*}
for some constant $\gamma>0$ such that $\widetilde{F}(z)\leq F(x,y,z)$ holds for all $(x,y,z)\in \Omega\times V\times\mathscr{R}(\A)$. 
\end{enumerate}
\begin{theorem}\label{thm:fullyautonomous}
Let $F\in\hold(\Omega\times V\times\mathscr{R}(\A))$ be a variational integrand satisfying \emph{\ref{item:H1'}--\ref{item:H5'}} from above. Then there exists $\beta_{0}\in (0,1)$ such that for every local minimiser $u\in\sobo_{\locc}^{1,p}(\Omega;V)$ of 
\begin{align*} 
\mathscr{F}[v;\omega]:=\int_{\omega}F(x,v,\A v)\dif x,\qquad\omega\subset\Omega,
\end{align*}
there exists an open set $\Omega_{0}\subset\Omega$ with $\mathscr{L}^{n}(\Omega\setminus\Omega_{0})=0$ such that $u$ is of class $\hold_{\locc}^{1,\beta_{0}}$ on $\Omega_{0}$. 
\end{theorem}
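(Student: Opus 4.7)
The plan is to mimic the reduction scheme already employed in the proof of Theorem~\ref{thm:main1}, now including the explicit $(x,y)$-dependence. By the identification procedure summarised after \eqref{eq:essentialreduction}, we may assume without loss of generality that $V=\R^{N}$ and $W=\mathscr{R}(\A)\subset\R^{N\times n}$, so that $\A v(x)=\pi_{\A}(\nabla v(x))$ for a fixed linear projection $\pi_{\A}\colon\R^{N\times n}\to\R^{N\times n}$. Define the auxiliary integrand
\begin{align*}
G\colon\Omega\times V\times\R^{N\times n}\ni(x,y,z)\longmapsto F(x,y,\pi_{\A}(z))\in\R,
\end{align*}
and, for $v\in\sobo_{\locc}^{1,p}(\Omega;\R^{N})$ and $\omega\Subset\Omega$, the functional $\mathscr{G}[v;\omega]:=\int_{\omega}G(x,v,\nabla v)\dif x$. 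Since $\pi_{\A}$ is linear, $\pi_{\A}(\nabla v)=\A v$ pointwise, so that $\mathscr{G}[v;\omega]=\mathscr{F}[v;\omega]$ for all admissible $v$ and $\omega$; therefore every local minimiser of $\mathscr{F}$ in the sense of \eqref{eq:local} is a local minimiser of $\mathscr{G}$.

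The next step is to verify that $G$ satisfies the structural hypotheses required by a known partial H\"{o}lder regularity theorem for fully non-autonomous, $p$-strongly quasiconvex full-gradient functionals of Acerbi--Fusco/Duzaar--Mingione type (cf.\ \cite{AF1,CFM98,DGK05}). Continuity of $\operatorname{D}_{zz}^{2}G$ follows from \ref{item:H1'} via the chain rule and the linearity of $\pi_{\A}$; the growth bound $|G(x,y,z)|\leq c(1+|z|^{p})$ is inherited from \ref{item:H2'} together with $|\pi_{\A}(z)|\leq|z|$; and the VMO-type modulus of continuity in $(x,y)$ from \ref{item:H3'} carries over verbatim, since $\pi_{\A}$ acts only on the third slot. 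The minorisation in \ref{item:H5'} provides a $p$-strongly quasiconvex-at-zero full-gradient minorant $\widetilde{G}(z):=\widetilde{F}(\pi_{\A}(z))$ for $G$, using the same Korn-type estimate of Proposition~\ref{prop:Korn} that was applied in the proof of Theorem~\ref{thm:main1} with $\psi(t)=t^{2}+t^{p}$.

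The decisive point is to transfer the strong $\mathscr{A}$-quasiconvexity assumption \ref{item:H4'} into a strong quasiconvexity of $G$ in the gradient variable at every frozen $(x,y)\in\Omega\times V$. For fixed $\varphi\in\hold_{c}^{1}((0,1)^{n};\R^{N})$ and $z\in\R^{N\times n}$, applying Proposition~\ref{prop:Korn} with $\psi(t):=t^{2}+t^{p}$, which satisfies the $\Delta_{2}\cap\nabla_{2}$-condition and to which the proof of the case $p\geq 2$ of Theorem~\ref{thm:main1} already appealed, yields a constant $c=c(\A,p)>0$ such that
\begin{align*}
\int_{(0,1)^{n}}|\nabla\varphi|^{2}+|\nabla\varphi|^{p}\dif\eta\leq c\int_{(0,1)^{n}}|\A\varphi|^{2}+|\A\varphi|^{p}\dif\eta.
\end{align*}
Combining this with \ref{item:H4'}, and using that $G(x,y,z+\nabla\varphi)-G(x,y,z)=F(x,y,\pi_{\A}(z)+\A\varphi)-F(x,y,\pi_{\A}(z))$, we obtain the uniform strong quasiconvexity
\begin{align*}
\frac{\ell}{c}\int_{(0,1)^{n}}|\nabla\varphi|^{2}+|\nabla\varphi|^{p}\dif\eta\leq\int_{(0,1)^{n}}G(x,y,z+\nabla\varphi)-G(x,y,z)\dif\eta
\end{align*}
for every $(x,y,z)$ and every admissible $\varphi$, with a constant independent of $(x,y,z)$.

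With all hypotheses of a standard non-autonomous partial H\"{o}lder regularity theorem for $p$-strongly quasiconvex full-gradient functionals thus verified, we conclude that local minimisers of $\mathscr{G}$ admit an open regular set $\Omega_{0}\subset\Omega$ of full Lebesgue measure on which they are of class $\hold_{\locc}^{1,\beta_{0}}$ for some $\beta_{0}\in(0,1)$ depending on $p$, $n$, $\ell$ and the modulus $\omega$ from \ref{item:H3'} (in particular on the H\"{o}lder exponent $\sigma$). Since minimisers of $\mathscr{F}$ and $\mathscr{G}$ coincide, this proves the theorem. The main obstacle in the above programme is the uniform transfer of the strong $\mathscr{A}$-quasiconvexity through the projection $\pi_{\A}$; however, since the Korn constant in Proposition~\ref{prop:Korn} depends only on $\A$, $\Delta_{2}(\psi)$ and $\nabla_{2}(\psi)$ and is independent of the frozen parameters $(x,y,z)$, this transfer is completely uniform, which is precisely what the non-autonomous regularity theory requires.
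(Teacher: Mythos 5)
Your proposal follows essentially the same route as the paper: reduce to the full-gradient case via $G(x,y,z):=F(x,y,\pi_{\A}(z))$, verify \ref{item:H1'}--\ref{item:H5'} for $G$ (transferring the strong $\mathscr{A}$-quasiconvexity and the minorant $\widetilde{G}=\widetilde{F}\circ\pi_{\A}$ through Proposition~\ref{prop:Korn}), and invoke the non-autonomous partial regularity theorem of Acerbi--Fusco, which the paper cites precisely as \cite[Thm.~II.2]{AF1}. The only cosmetic point is that $\pi_{\A}$ need not be a contraction, so one should write $|\pi_{\A}(z)|\leq c|z|$ rather than $|\pi_{\A}(z)|\leq|z|$; this only affects constants.
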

\begin{proof}
We argue as in the proof of Theorem~\ref{thm:main1}, case $p\geq 2$. Recalling that we may assume that $V=\R^{N}$ and $W=\mathscr{R}(\A)\subset\R^{N\times n}$, we define an integrand $G$ by 
\begin{align*}
G\colon \Omega\times \R^{N}\times\R^{N\times n}\ni (x,y,z) \mapsto F(x,y,\pi_{\A}(z)).
\end{align*}
Clearly, $G$ is still a continuous integrand with $\mathrm{D}_{zz}^{2}G\in\hold(\Omega\times\R^{N}\times\R^{N\times n})$ and $|G(x,y,z)|\leq c(1+|z|^{p})$ holds for all $(x,y,z)\in\Omega\times\R^{N}\times\R^{N\times n}$. Equally, we see that $G$ satisfies the corresponding analogues of \ref{item:H3'} and \ref{item:H4'}. On the other hand, put $\widetilde{G}(z):=\widetilde{F}(\pi_{\A}(z))$. Then $\widetilde{G}(z)\leq G(x,y,z)$ for all $(x,y,z)\in\Omega\times\R^{N}\times\R^{N\times n}$, and by the same argument as in the proof of Theorem~\ref{thm:main1}, case $p\geq 2$ (cf.~\eqref{eq:pstrongEQUIV1}), $\widetilde{G}$ satisfies 
\begin{align*}
\nu\int_{(0,1)^{n}}|D\varphi|^{p}\dif x \leq \int_{(0,1)^{n}}\widetilde{G}(D\varphi)-\widetilde{G}(0)\dif x
\end{align*}
for some $\nu>0$ and all $\varphi\in\hold_{c}^{1}((0,1)^{n};\R^{N})$. As a consequence, $G$ satisfies the requirements of \cite[Thm.~II.2]{AF1}. Hence there exists $\beta_{0}\in(0,1)$ such that for every local minimiser $u\in\sobo_{\locc}^{1,p}(\Omega;\R^{N})$ of $w\mapsto \int G(x,w(x),Dw(x))\dif x$ there exists an open set $\Omega_{0}\subset\Omega$ with $\mathscr{L}^{n}(\Omega\setminus\Omega_{0})=0$ and $u$ is of class $\hold_{\locc}^{1,\beta_{0}}$ in a neighbourhood of any of the points in $\Omega_{0}$. This inherits to the local minima of $\mathscr{F}$, and the proof is hereby complete.  
\end{proof}
\subsubsection{Higher order strong quasiconvexity}\label{sec:higherorder}
As alluded to in the introduction, if the annihilator $\mathscr{A}$ is given, then the potential $\A$ provided by Lemma~\ref{lem:potentials} does not need to have first order. We may, however, consider for $m\in\mathbb{N}$ an operator
\begin{align}\label{eq:form2}
\A := \sum_{|\alpha|=m}\A_{\alpha}\partial^{\alpha},
\end{align}
with $\A_{\alpha}\in\mathscr{L}(V;W)$, such that the symbol complex \eqref{eq:complex} is exact at $W$ for any $\xi\in\R^{n}\setminus\{0\}$. We then have the following higher order variant of Theorem~\ref{thm:main1}; as in the first order case, $\A$ is called \emph{elliptic} provided for each $\xi\in\R^{n}\setminus\{0\}$, $\A[\xi]=\sum_{|\alpha|=m}\xi^{\alpha}\A_{\alpha}\colon V\to W$ is injective.
\begin{theorem}\label{thm:higherorder}
Let $p\geq 2$ and let $\A$ be a constant rank differential operator of the form \eqref{eq:form2} of order $m\in\mathbb{N}$ and $\mathscr{A}$ of the form \eqref{eq:form1} such that the symbol complex \eqref{eq:complex} is exact at $W$ for any $\xi\in\R^{n}\setminus\{0\}$. Moreover, suppose that $F\colon W\to\R$ satisfies \ref{item:H1}--\ref{item:H3} and, for some constant $c>0$, the second derivative growth bound 
\begin{align}\label{eq:secondderivativeGB}
|F''(z)|\leq c(1+|z|^{p-2})\qquad\text{for all}\;z\in W. 
\end{align}
Then the following are equivalent:
\begin{enumerate}
\item $\A$ is elliptic. 
\item Every local minimiser of the integral functional $v\mapsto \int F(\A v)\dif x$ is $\hold_{\locc}^{m,\alpha}$-partially regular in the sense that there exists an open set $\Omega_{u}\subset\Omega$ with $\mathscr{L}^{n}(\Omega\setminus\Omega_{u})=0$ such that $u$ is of class $\hold_{\locc}^{m,\alpha}$ in an open neighbourhood of any of the points in $\Omega_{u}$ for any $0<\alpha<1$. 
\end{enumerate}
\end{theorem}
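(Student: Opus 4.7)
The plan is to mirror the strategy used for Theorem~\ref{thm:main1} in the case $p\geq 2$: reduce the higher order $\A$-functional to a classical $m$-th order full derivative functional, and then invoke an existing $m$-th order partial regularity theorem for strongly quasiconvex integrands. The direction \ref{item:THMelliptic}$\Rightarrow$\ref{item:THMpr} is the substantive one; \ref{item:THMpr}$\Rightarrow$\ref{item:THMelliptic} follows verbatim as in Theorem~\ref{thm:main1}, by appending to a purportedly partially regular local minimiser an element of $\ker(\A)$ that fails to be $\hold^{m}$ on any open set, produced by the plane-wave construction behind the higher order analogue of Corollary~\ref{cor:smooth}.

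For the hard direction, I first perform the identification scheme of Section~\ref{sec:diffops}, which in the $m$-th order case allows me to assume $V=\R^{N}$ and that $W=\mathscr{R}(\A)$ sits inside the space $\odot^{m}(\R^{n};V)\cong\R^{M}$ of symmetric $m$-tensors via an orthogonal projection $\pi_{\A}$; then $\A v(x)=\pi_{\A}(\nabla^{m}v(x))$. I set
\begin{align*}
G\colon \odot^{m}(\R^{n};V)\ni z\mapsto F(\pi_{\A}(z))\in\R,
\end{align*}
which by \ref{item:H1}, \ref{item:H2} and \eqref{eq:secondderivativeGB} lies in $\hold^{2}$ and satisfies the matching $p$-growth bounds on $G$ and $G''$.

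The central analytic step is a higher order Korn inequality. Since $\A[\xi]\colon V\to W$ is injective and homogeneous of degree $m$, the matrix-valued symbols
\begin{align*}
\Theta_{\alpha,\A}(\xi):=\xi^{\alpha}(\A^{*}[\xi]\A[\xi])^{-1}\A^{*}[\xi],\qquad |\alpha|=m,
\end{align*}
are smooth on $\R^{n}\setminus\{0\}$ and homogeneous of degree $|\alpha|-2m+m=0$. Applying Lemma~\ref{lem:Mihlin} with $\psi(t)=t^{2}+t^{p}$ yields, as in Proposition~\ref{prop:Korn}, the modular estimate
\begin{align*}
\int_{\R^{n}}|\nabla^{m}u|^{2}+|\nabla^{m}u|^{p}\dif x \leq c\int_{\R^{n}}|\A u|^{2}+|\A u|^{p}\dif x
\end{align*}
for all $u\in\hold_{c}^{\infty}(\R^{n};V)$. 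Combined with the strong $\mathscr{A}$-quasiconvexity assumption \ref{item:H3} and the $p\geq 2$ bound from Lemma~\ref{lem:compare}~\ref{item:compare1}, exactly as in the display preceding the end of the proof of Theorem~\ref{thm:main1}, this transfers into the full-gradient-style strong $m$-quasiconvexity of $G$: there exists $\mu>0$ such that
\begin{align*}
\mu\int_{(0,1)^{n}}|\nabla^{m}\varphi|^{2}+|\nabla^{m}\varphi|^{p}\dif x\leq\int_{(0,1)^{n}}G(z+\nabla^{m}\varphi)-G(z)\dif x
\end{align*}
holds for all $\varphi\in\hold_{c}^{\infty}((0,1)^{n};\R^{N})$ and $z\in\odot^{m}(\R^{n};\R^{N})$.

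The final step is to invoke the partial $\hold^{m,\alpha}_{\locc}$-regularity theory for minimisers of $m$-th order strongly quasiconvex functionals with $p\geq 2$ and second derivative growth $|G''(z)|\lesssim 1+|z|^{p-2}$; the growth bound \eqref{eq:secondderivativeGB} on $F$ is precisely what transfers to $G$ and permits the classical $A$-harmonic approximation/blow-up scheme at order $m$ to go through. Since any local minimiser $u$ of $v\mapsto\int F(\A v)\dif x$ is also a local minimiser of $v\mapsto\int G(\nabla^{m}v)\dif x$ (the two integrands coincide), the $\hold^{m,\alpha}_{\locc}$-partial regularity inherits to $u$, completing the proof.

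The main obstacle I anticipate is not the Korn step, which is essentially formal, but locating and applying a clean off-the-shelf $m$-th order partial regularity statement under hypotheses \ref{item:H1}--\ref{item:H3} plus \eqref{eq:secondderivativeGB} and $p\geq 2$; the literature for first order strongly quasiconvex problems (\cite{AF1,CFM98}) admits natural extensions to higher order, but verifying the precise form of the strong quasiconvexity assumption needed there, and matching it with the inequality produced by the Korn reduction, is the delicate bookkeeping point.
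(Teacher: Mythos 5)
Your proposal is correct and follows essentially the same route as the paper: the higher order Korn inequality via the degree-zero multipliers $\xi^{\alpha}(\A^{*}[\xi]\A[\xi])^{-1}\A^{*}[\xi]$ for $|\alpha|=m$ applied with $\psi(t)=t^{2}+t^{p}$, the reduction to $G=F\circ\pi_{\A}$ on $\odot^{m}(\R^{n};\R^{N})$, the transfer of the strong quasiconvexity and of the second derivative growth bound, and finally an off-the-shelf $m$-th order full gradient partial regularity theorem. The reference you were missing for that last step is \textsc{Kronz} \cite[Thm.~2]{Kronz}, whose hypotheses are precisely the integrated strong quasiconvexity you derive together with $|G''(z)|\leq\lambda(1+|z|^{p-2})$ --- which is exactly why \eqref{eq:secondderivativeGB} appears as an assumption in the theorem.
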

\begin{proof} 
As in the proof of Theorem~\ref{thm:main1}, ellipticity of $\A$ is easily seen to be necessary. On the other hand, Proposition~\ref{prop:Korn} yields for elliptic $\A$ of the form \eqref{eq:form2} that 
\begin{align*}
\int_{\R^{n}}\psi(|D^{m}v|)\dif x \leq c\int_{\R^{n}}\psi(|\A v|)\dif x\qquad\text{for all}\;v\in\hold_{c}^{\infty}(\R^{n};V).
\end{align*}
In fact, the previous inequality is derived completely analogously by replacing \eqref{eq:inverse} with 
\begin{align}\label{eq:inverse1}
\Phi_{\alpha,\A}(w)(x) = \mathscr{F}_{\xi\mapsto x}^{-1}\big[\xi^{\alpha}(\mathbb{A}^{*}[\xi]\mathbb{A}[\xi])^{-1}\mathbb{A}^{*}[\xi]\mathscr{F}w(\xi)\big],\qquad x\in\R^{n}
\end{align}
for $\alpha\in\mathbb{N}_{0}^{n}$ with $|\alpha|=m$. For our reduction procedure, we now note that in the higher order case as considered here, \eqref{eq:dimnote} takes the form $\dim(\mathscr{R}(\A))\leq \dim(\odot^{m}(\R^{n};\R^{N}))$ whereby we may assume that $\mathscr{R}(\A)\subset\odot^{m}(\R^{n};\R^{N})$. As is done for Theorem~\ref{thm:main1}, we may hereafter invoke the higher order partial regularity result due to \textsc{Kronz}  \cite[Thm.~2]{Kronz}; namely, if $G\in\hold^{2}(\odot^{m}(\R^{n};\R^{N}))$ is $p$-strongly quasiconvex in the sense that there exists $\nu>0$ such that 
\begin{align*}
\nu\int_{(0,1)^{n}}|D^{m}\varphi|^{2}+|D^{m}\varphi|^{p}\dif x \leq \int_{\Omega}G(z+D^{m}\varphi)-G(z)\dif x
\end{align*}
for all $\varphi\in\hold_{c}^{\infty}((0,1)^{n};\R^{N})$ and there exists $\lambda>0$ such that 
\begin{align*}
|G''(z)|\leq \lambda (1+|z|^{p-2})\qquad\text{for all}\;z\in\odot^{m}(\R^{n};\R^{N}), 
\end{align*}
then any local minimiser of the integral functional $u\mapsto \int G(D^{m}u)\dif x$ is $\hold_{\locc}^{m,\alpha}$-partially regular. Define $\pi_{\A}\colon\odot^{m}(\R^{n};\R^{N})\to\odot^{m}(\R^{n};\R^{N})$ with the obvious modifications in \eqref{eq:AOpdef}. To conclude the proof in analogy with that of Theorem~\ref{thm:main1}, we note that $G(z):=F(\pi_{\A}(z))$ moreover satisfies 
\begin{align*}
|G''(z)|\leq C(\A)|(\mathrm{D}_{zz}F)(\pi_{\A}(z))|\leq C(\A)(1+|\pi_{\A}(z)|^{p-2})\stackrel{p\geq 2}{\leq} C(\A)(1+|z|)^{p-2}
\end{align*}
for all $z\in\odot^{m}(\R^{n};\R^{N})$. Then the proof evolves as above for Theorem~\ref{thm:main1}. 
\end{proof}
We conclude the paper with a remark on the dimension reduction for the singular set. 
\begin{remark}[Partial regularity versus $\mathscr{H}^{s}$-bounds on the singular set]
Whereas the partial regularity of local minima within the framework of Theorems~\ref{thm:main1} and  \ref{thm:fullyautonomous} can be approached by reduction to the full gradient case, this is not so for Hausdorff dimension bounds of the singular set. By its nonlocality \cite{Kristensen}, quasiconvexity is substantially different from convexity. Moreover,  techniques as from the convex case \cite{CoFoIu,Giusti,Gm0,KristensenMingione,MingioneARMA1,Mingione1,Mingione2} do not apply here as they usually rely on the Euler-Lagrange system satisfied by the minimisers and positive definiteness of the integrands' second derivatives. To the best of our knowledge, in the quasiconvex case the only available bounds on the Hausdorff dimension of the singular set of local minima have been obtained by \textsc{Kristensen \& Mingione} \cite{KristensenMingione1} in the superquadratic case \emph{subject to a local Lipschitz assumption on the local minima}. Whereas this assumption seems plausible in the full gradient case, in the situation of Theorem~\ref{thm:main1} it would be more natural to suppose that $\A u\in\lebe_{\locc}^{\infty}(\Omega;W)$ for a given local minimiser $u\colon\Omega\to V$. Recalling the operators $\Phi_{j,\A}$ from \eqref{eq:inverse} (which can be realised as a local singular integral plus the identity), we find that $\Phi_{j,\A}\colon\lebe_{\locc}^{\infty}\to\mathrm{BMO}_{\locc}$. For minima of strongly quasiconvex integrals which are not locally Lipschitz but merely locally in $\mathrm{BMO}$, a dimension reduction for the singular set seems unavailable yet.

\end{remark}

\section{Appendix}\label{sec:appendix}
\begin{figure}
\begin{tikzpicture}[scale=1.5]
\draw[->] (-0.5,1) to (0.5,1); 
\node at (0,1.2) {$\zeta$};
\node at (-1,1) {$\mathscr{R}(\mathbb{A})$};
\node at (1,1) {$\mathscr{R}(\mathbb{A})$};
\node at (2,1) {$\oplus$};
\node at (3,1) {$\R^{l}$};
\node at (1,-0.5) {$\kappa(\mathscr{R}(\mathbb{A}))$};
\node at (2,-0.5) {$\oplus$};
\node at (3,-0.5) {$\kappa(\mathscr{R}(\mathbb{A}))^{\bot}$};
\draw[-] (2,0.75) to (2,0.5);
\draw[->] (2,0) to (2,-0.25);
\node at (2,0.25) {$\iota=(\kappa,\kappa')$};
\draw[dashed,<-] (0.5,-0.25) arc (200:150:35pt);
\node at (0.25,0.25) {$\kappa$};
\node at (3.65,0.28) {$\kappa'$};
\node at (3.25,0.25) {$\cong$};
\node at (0.62,0.25) {$\cong$};
\node at (4.25,-0.5) {$= \mathscr{L}(\R^{n};V)$};
\draw[->] (5,-0.475) -- (6,-0.475);
\node at (5.5,-0.3) {$\iota'$};
\node at (6.45,-0.45) {$\R^{N\times n}$};
\draw[dashed,<-] (3.375,-0.25) arc (-25:25:35pt);
\end{tikzpicture}
\caption{Identification procedure notation in the general case}
\end{figure}

To keep our main exposition simple and to elaborate on the essentials, up from \eqref{eq:dimnote} we adopted the viewpoint that $W=\mathscr{R}(\A)\subset\R^{N\times n}$. Here we give the elementary underlying framework that rigorously justifies this framework; to connect this setup with the usual one, we here adopt the viewpoint as in \cite{CFM98,Evans} that the gradient of a map $v\colon\R^{n}\supset\Omega\to\R^{N}$ is $\R^{N\times n}$-valued.

By \eqref{eq:dimnote}, there exists a linear injection $\kappa\colon \mathscr{R}(\A)\hookrightarrow \mathscr{L}(\R^{n};V)$. We then define $l:=\mathrm{codim}(\kappa(\mathscr{R}(\A)))$ to be the codimension of $\kappa(\mathscr{R}(\A))$ in $\mathscr{L}(\R^{n};V)$. Choose and fix an isomorphism $\kappa'\colon \R^{l}\to\kappa(\mathscr{R}(\A))^{\bot}$. Then 
\begin{align*}
\iota\colon \mathscr{R}(\A)\oplus \R^{l}\ni (z,z')\mapsto \kappa(z)+\kappa'(z')\in\mathscr{L}(\R^{n};V)
\end{align*}
is an isomorphism. We choose a particular orthonormal basis $\{\mathbf{v}_{1},...,\mathbf{v}_{N}\}$ for $V$ and denote $\lambda\colon\R^{N}\to V$ the corresponding coordinate map. Having fixed a particular basis for $V$, we now take the canonical identification $\iota'\colon \mathscr{L}(\R^{n};V)\stackrel{\cong}{\longrightarrow}\R^{N\times n}$. Put $\zeta\colon\mathscr{R}(\A)\ni z \mapsto (z,0)\in\mathscr{R}(\A)\oplus\R^{l}$. Then consider the differential operator 
\begin{align}
\widetilde{\A}v:=\sum_{j=1}^{n}(\iota'\circ\iota\circ \zeta\circ \A_{j}\circ\lambda)\partial_{j}v,\qquad v\colon\R^{n}\to \R^{N}, 
\end{align}
which is now a linear, homogeneous first order operator with constant coefficients on $\R^{n}$ from $\R^{N}$ to $\R^{N\times n}$. We may thus write for $v=(v_{1},...,v_{N})\colon\R^{n}\to\R^{N}$:
\begin{align}\label{eq:Atildedef}
\begin{split}
\widetilde{\A} v(x) = \left(\begin{matrix} \sum_{i=1}^{n}\sum_{k=1}^{N}a_{1,1}^{i,k}\partial_{i}v_{k}(x) & \hdots & \sum_{i=1}^{n}\sum_{k=1}^{N}a_{1,n}^{i,k}\partial_{i}v_{k}(x) \\ \vdots & \ddots & \vdots \\ \sum_{i=1}^{n}\sum_{k=1}^{N}a_{N,1}^{i,k}\partial_{i}v_{k}(x) & \hdots & \sum_{i=1}^{n}\sum_{k=1}^{N}a_{N,n}^{i,k}\partial_{i}v_{k}(x) \end{matrix}\right),
\end{split}
\end{align}
in turn being $\R^{N\times n}$-valued, for suitable $a_{\mu,\nu}^{i,k}\in\R$, $k,\mu\in\{1,...,N\}$ and $i,\nu\in\{1,...,n\}$. Now define a linear operator $\pi_{\widetilde{\A}}\colon\R^{N\times n}\to\mathscr{R}(\widetilde{\A})$ by 
\begin{align}\label{eq:piAdef}
\begin{split}
\pi_{\widetilde{\A}}\colon z & = (z_{ik})_{\substack{ i=1,...,n\\ k=1,...,N}} \\ & \mapsto \left(\begin{matrix} \sum_{i=1}^{n}\sum_{k=1}^{N}a_{1,1}^{i,k}z_{ik} & \hdots & \sum_{i=1}^{n}\sum_{k=1}^{N}a_{1,n}^{i,k}z_{ik} \\ \vdots & \ddots & \vdots \\ \sum_{i=1}^{n}\sum_{k=1}^{N}a_{N,1}^{i,k}z_{ik} & \hdots & \sum_{i=1}^{n}\sum_{k=1}^{N}a_{N,n}^{i,k}z_{ik}\end{matrix}\right).
\end{split}
\end{align}
\begin{lemma}\label{lem:reduction}
Given a differential operator $\A$ of the form \eqref{eq:form}, define $\widetilde{\A}$ by \eqref{eq:Atildedef}. Then the following hold: 
\begin{enumerate}
\item\label{item:red1} $\A$ is elliptic if and only if $\widetilde{\A}$ is. 
\item\label{item:red2} Let $F\in\hold(W)$ and let $\Omega\subset\R^{n}$ be open. Then a map $v\in\lebe_{\locc}^{p}(\Omega;V)$ with $\A v\in\lebe_{\locc}^{p}(\Omega;W)$ is a local minimiser for $\mathscr{F}[-;\Omega]$ if and only if $\lambda^{-1}v$ is a local minimiser for 
\begin{align*}
\widetilde{\mathscr{F}}[w;\omega]=\int_{\omega}F(\mathrm{proj}_{1}\circ\iota^{-1}\circ(\iota')^{-1}\circ \pi_{\widetilde{\A}}(\nabla w))\dif x,\qquad\omega\subset\Omega, 
\end{align*}
where $\mathrm{proj}_{1}\colon\mathscr{R}(\A)\oplus\R^{l}\ni (z,z') \mapsto z\in\mathscr{R}(\A)$ is the projection onto the first component. 
\end{enumerate} 
\end{lemma}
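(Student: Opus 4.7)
The plan is straightforward bookkeeping: both parts reduce to unwinding the chain of linear identifications summarised in the figure, once one observes that the Fourier symbol of $\widetilde{\A}$ factors cleanly through that of $\A$.

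For part \ref{item:red1}, I will first note that by linearity of $\iota'$, $\iota$, $\zeta$, $\lambda$ and the definition \eqref{eq:Atildedef}, the symbol of $\widetilde{\A}$ at any $\xi\in\R^{n}\setminus\{0\}$ factors as
\[
\widetilde{\A}[\xi] \;=\; \iota' \circ \iota \circ \zeta \circ \A[\xi] \circ \lambda.
\]
Here $\A[\xi]\colon V\to W$ actually takes values in $\mathscr{R}(\A)$ (each $\A[\xi]v = v\otimes_{\A}\xi$ is a pure $\A$-tensor), so post-composition with $\zeta$ makes sense. Since $\lambda$, $\iota$, $\iota'$ are isomorphisms and $\zeta\colon z\mapsto(z,0)$ is injective, one reads off that $\widetilde{\A}[\xi]$ is injective if and only if $\A[\xi]$ is, which gives \ref{item:red1}.

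For part \ref{item:red2}, the analogous factorisation at the level of derivatives shows that, for any $v\in\lebe_{\locc}^{p}(\Omega;V)$ with $\A v\in\lebe_{\locc}^{p}(\Omega;W)$, the image $w:=\lambda^{-1}v\in\lebe_{\locc}^{p}(\Omega;\R^{N})$ satisfies
\[
\pi_{\widetilde{\A}}(\nabla w) \;=\; \widetilde{\A}w \;=\; (\iota' \circ \iota \circ \zeta)(\A v)\qquad\text{a.e. in }\Omega,
\]
and hence $\A v = \mathrm{proj}_{1}\circ\iota^{-1}\circ(\iota')^{-1}\circ\pi_{\widetilde{\A}}(\nabla w)$ a.e. Feeding this identity into $F$ yields $\mathscr{F}[v;\omega] = \widetilde{\mathscr{F}}[w;\omega]$ for every open $\omega\subset\Omega$. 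Since $\varphi\mapsto\lambda^{-1}\varphi$ is a bijection $\hold_{c}^{\infty}(\omega;V)\to\hold_{c}^{\infty}(\omega;\R^{N})$ that preserves the relevant integrability classes, the defining inequality \eqref{eq:local} for local minimality of $\mathscr{F}$ transfers exactly to the corresponding inequality for $\widetilde{\mathscr{F}}$ and back, giving the claimed equivalence of minimisers.

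The only mildly delicate point is the initial verification that the composite operator $\widetilde{\A}$ indeed has the shape \eqref{eq:Atildedef}; this is direct from the definitions since each $\A_{j}\circ\lambda\colon\R^{N}\to W$ takes values in the span of pure $\A$-tensors, hence in the domain $\mathscr{R}(\A)$ of $\zeta$. Beyond this, no genuine obstacle arises; the proof is entirely a matter of tracking the diagram.
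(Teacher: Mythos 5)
Your proof is correct and follows essentially the same route as the paper's: the symbol factorisation $\widetilde{\A}[\xi]=\iota'\circ\iota\circ\zeta\circ\A[\xi]\circ\lambda$ for part (a) and the identity $\mathrm{proj}_{1}\circ\iota^{-1}\circ(\iota')^{-1}\circ\pi_{\widetilde{\A}}(\nabla(\lambda^{-1}v))=\A v$ for part (b) are exactly the computations in the paper (cf.~\eqref{eq:connections}), and your remark that $\A_{j}\circ\lambda$ lands in $\mathscr{R}(\A)$ so that $\zeta$ may be applied, as well as the explicit transfer of test functions via $\varphi\mapsto\lambda^{-1}\varphi$, are correct and if anything slightly more careful than the published argument.
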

\begin{proof}
Ad~\ref{item:red1}. The symbol of $\widetilde{\A}$ is given by $\widetilde{\A}[\xi]v=\sum_{j=1}^{n}(\iota'\circ\iota\circ\zeta\circ\A_{j}\circ\lambda)\xi_{j}v$, $\xi\in\R^{n}$ and $v\in\R^{N}$. Suppose that $\widetilde{\A}[\xi]v=0$ for some $\xi\in\R^{n}\setminus\{0\}$. By bijectivity of $\iota'\circ\iota\colon \mathscr{R}(\A)\oplus\R^{l}\to\R^{N\times n}$, we deduce that $\zeta(\sum_{j=1}^{n}(\xi_{j}\A_{j}\circ\lambda)(v))=0$. This is only possible if $\sum_{j=1}^{n}\xi_{j}\A_{j}\lambda(v)=0$, and by ellipticity of $\A$, $\lambda(v)=0$. But $\lambda$ is an isomorphism, hence $v=0$. The other direction is similar. 

Ad~\ref{item:red2}. By \eqref{eq:Atildedef} and \eqref{eq:piAdef}, we have $\pi_{\widetilde{\A}}(\nabla (\lambda^{-1}v))=\widetilde{\A}(\lambda^{-1}v)$ and so, by definition of $\iota,\iota'$ and with $w=\lambda^{-1}v$, 
\begin{align}\label{eq:connections}
\begin{split}
\mathrm{proj}_{1}\circ\iota^{-1}\circ(\iota')^{-1}&\circ \pi_{\widetilde{\A}}(\nabla  w) \\ & = \mathrm{proj}_{1}\circ\iota^{-1}\circ (\iota')^{-1}\Big(\sum_{j=1}^{n}\iota'\circ\iota\circ\zeta\circ\A_{j}\circ\lambda (\partial_{j}w) \Big)\\
& = \sum_{j=1}^{n}\mathrm{proj}_{1}\circ\zeta\circ\A_{j}\circ\lambda(\partial_{j}w) \\ 
& = \sum_{j=1}^{n}\A_{j}\circ\lambda(\partial_{j}\lambda^{-1}v) = \A v. 
\end{split} 
\end{align}
Therefore, in particular, 
\begin{align*}
\widetilde{\mathscr{F}}[\lambda^{-1}v;\omega]= \int_{\omega}F(\A v)\dif x\qquad\text{for}\;\omega\subset\Omega, 
\end{align*}
which then yields the equivalence claimed in \ref{item:red2}. The proof is complete. 
\end{proof}
Instead of \eqref{eq:Gdef}, the proof of Theorem~\ref{thm:main1} is concluded by means of the integrand
\begin{align}\label{eq:G00def}
G\colon \R^{N\times n}\ni z \mapsto F(\mathrm{proj}_{1}\circ\iota^{-1}\circ(\iota')^{-1}\circ\pi_{\widetilde{\A}}(z)).
\end{align}

\end{document}